\date{}
\def\Ker{\text{\rm Ker}}
\def\bbr{\mathbb{F}}
\def\bbz{\mathbb{Z}}
\def\bbh{\mathbb{H}}
\def\suml{\sum\limits}
\theoremstyle{plain}
\newtheorem{thm}{Theorem}
\newtheorem*{thm*}{Theorem}
\newtheorem{lemma}[thm]{Lemma}
\newtheorem{prop}[thm]{Proposition}
\theoremstyle{definition} 
\newtheorem*{definition*}{Definition}
\newtheorem*{thm1*}{Theorem A1}
\newtheorem*{thm2*}{Theorem A2}
\newtheorem*{conjecture*}{Conjecture}
\newtheorem{cor}[thm]{Corollary}
\newtheorem*{claim*}{Claim}
\newtheorem*{question*}{Question}
\newtheorem{remark}[thm]{Remark}
\def\bbz{\mathbb{Z}}
\def\bbq{\mathbb{Q}}
\def\bbr{\mathbb{R}}
\def\bbh{\mathbb{H}}
\def\be{\begin{equation}}
\def\ee{\end{equation}}
\def\a{\alpha}
\def\vare{\varepsilon}
\theoremstyle{remark}  
\DeclareMathOperator{\Vol}{Vol}
\DeclareMathOperator{\Dim}{Dim}
\DeclareMathOperator{\Diam}{Diam}
\begin{document}

\title[Quantum codes]{Quantum error correcting codes\\ and 4-dimensional arithmetic hyperbolic manifolds}
\author{Larry Guth}
\author{Alexander Lubotzky}
\maketitle

\baselineskip 16pt

\begin{abstract} Using $4$-dimensional arithmetic hyperbolic manifolds, we construct some new homological quantum error correcting codes. They are LDPC codes with linear rate and distance $n^\epsilon$. Their rate is evaluated via Euler characteristic arguments and their distance using $\bbz_2$-systolic geometry.
This construction answers a queston of Z\'emor \cite{Z}, who asked whether homological codes with such parameters could exist at all.  
\end{abstract}
\section{Introduction}

The goal of this paper is to present homological quantum error correcting codes (QECC) based on a family of finite sheeted congruence covers of a fixed $4$-dimensional arithmetic hyperbolic manifold. We then estimate the rate and distance of these codes and show that they answer a question of Z\'emir \cite{Z}, who asked if homological codes with such parameters could exist at all.

A (classical linear) code is a subspace $C$ of $\bbz^n_2$ of dimension $k$.  Its rate $r$ is, by definition, $r = \frac kn$ and its distance
is defined as $d = d(C) = \min \{ wt(\a) \Big | 0 \neq \a \in C\}$ where $wt(\a)$ is the Hamming weight of $\a$, i.e., the number of non-zero entries of $\a$.  Write $\delta = \delta (C)$ for $\frac dn $ - the normalized distance.  The standard terminology is that $C$ is an $[n, k, d]$-code.

The quantum codes we will consider here will all be the so called CSS-codes (see \cite{NC}, \cite{P}, \cite{F1} for an explanation  what the following construction has to do with quantum error correction).  A quantum CSS-code $\mathcal{C} = (W_1, W_2)$ is defined by two orthogonal subspaces $W_1$ and $W_2$ in $\bbz^n_2$, i.e., for every $\a = (a_1,\dots, a_n) \in W_1$ and $\beta = (b_1,\dots, b_n) \in W_2$, $\a \cdot \beta = 0$ where $\a \cdot \beta := \suml^n_{i = 1} a_ib_i$.  Let $W^\perp_1, W^\perp_2$ be the orthogonal subspaces to $W_1 $ and $W_2$, so: $W_1 \subseteq W^\perp_2$ and $W_2 \subseteq W^\perp_1$.  The dimension $k$ of $\mathcal C$ is defined as $k = \dim(W^\perp_1/W_2) = \dim (W^\perp_2 / W_1) = n - \dim W_1 - \dim W_2$ and the rate $ r = \frac kn$. The distance of the code $\mathcal C$ is defined as:
\[d(W_1, W_2) = \min\{ wt(\a)\Big | \a \in (W^\perp_1\setminus W_2) \cup (W^\perp_2 \setminus W_1)\}\] and $ \delta = \delta (\mathcal C) = \frac dn$.  Here one writes that $\mathcal C$ is an $\big[ [n, d, c] \big ]$-code.

\smallskip

While studying codes, one is usually interested in a family of codes when $n \to \infty$.  By abuse of the language we say that $C$ (resp. $\mathcal C$) is a good code if $r$ and $\delta$ are both bounded away from $0$, or equivalently, the dimension and the distance grow linearly with the length $n$ of the code.

The code $C$ (resp. $\mathcal C$) is called LDPC (= low density parity check) if $C^\perp$ is spanned by vectors of bounded Hamming weight (resp. $W_i = (W^\perp_i)^\perp$ is spanned by vectors of bounded weight for $i = 1 $ and $2$).

Good LDPC classical codes have been known to exist, by random consideration, since the fundamental work of Gallager \cite{G} in the 60's.  In 1996, Sipser and Spielman made an explicit construction of such codes using expander graphs (\cite{SS}).  It is still an open problem if good LDPC quantum codes exist at all.

Manifolds and simplicial complexes offer a natural construction of quantum LDPC codes, the so called ``homological codes'',  in the following way:

Let $X$ be a finite simplicial complex of dimension $D$ (if $M$ is a manifold, one can replace $M$ by a triangulation $X$ of it), i.e., $X$ is a set of subsets of $X^{(0)}$ (the set of vertices of $X$) of size $\le D + 1$ with the property that if $F \in X$ and $G \subseteq F$ then $G\in X$.  Let $X^{(i)} $ be the set of subsets in $X$ of size $i + 1$.  The space of mod 2 $i$-chains, $C_i = C_i (X, \bbz_2)$, is the $\bbz_2$-vector space spanned by $X^{(i)}$.  The space of mod 2 $i$-cochains, $C^i = C^i (X, \bbz_2)$, is the space of functions from $X^{(i)} $ to $\bbz_2$.  It is convenient to identify $C_i$ and $C^i$ in the clear way.

Let $\partial_i: C_i \to C_{i-1}$ be the boundary map, i.e.,
\[\partial_i(F) = \suml_{\scriptstyle G<F \atop \scriptstyle |G|=i} G \; \; {\rm  for} \; \;  F\in X^{(i)}\] and its adjoint $\delta_i: C_i\to C_{i+1}$ the cobounding map
\[\delta_i(F) = \suml_{ \scriptstyle F<G \atop \scriptstyle |G|=i+2}G \] (recall that as we work in characteristic two, we can ignore the orientation).  It is well known and easy to prove that for all $i$
\begin{equation}\label{1.1} \partial_i \circ \partial_{i+1} = 0 \; \; \hbox{\rm and} \; \; \delta_{i} \circ \delta_{i-1} = 0.\end{equation}
Hence $B_i:= $ the $i$-boundaries $ = Im \partial_{i+1}$ (resp: $B^i = $ the $i$-coboundaries $ = Im \delta_{i-1}$) is contained in $Z_i = $ the $i$-cycles $= \Ker \partial_i $ (resp: $Z^i = $ the $i$-cocycles = $\Ker \delta_i$).

Moreover, one can easily check that (1) implies the following (which is crucial for the quantum codes application):
\begin{equation}\label{1.2}
B^\perp_i = Z^i \; \; {\rm and} \; \; (B^i)^\perp = Z_i.
\end{equation}

One can therefore associate with $X$, for every $i \le D$, a quantum CSS-code $\mathcal C = (B_i, B^i)$ whose length is $n = |X^{(i)}|$, and its dimension is $k = \dim Z_i/B_i = \dim H_i (X, \bbz_2)$, i.e., the dimension of the homology of $X$ (or of $M$) with coefficients in $\bbz_2$.  The latter is also $k  = \dim Z^i/B^i = \dim H^i (X, \bbz_2)$, i.e., the dimension of the $i$-cohomology group.  Finally, the distance $d = d(\mathcal C)$ is the Hamming weight of a non-trivial homology or cohomology class i.e., the minimum weight of an $i$-cycle (which is not an $i$-boundary) or $i$-cocycle (which is not an $i$-coboundary).

Homological codes are attractive as if one let $M$  varies over finite sheeted covers of a fixed compact manifold, then the codes obtained are automatically LDPC since $B_i $ (resp. $B^i$) is generated by the images of the cells of dimension $i + 1$ (resp. $i - 1 $).  This gives these codes great potential, and this is what brings us to systolic geometry.

Systolic geometry is a subarea of Riemannian geometry studying volumes of non-trivial cycles and cocycles.  Fairly recently, it has been noticed that there is a connection between quantum error correcting codes and systolic geometry with $\bbz_2$ coefficients (cf. \cite{Fr}, \cite{MFL}, \cite{Z}, \cite{F2}).

Some of the most studied quantum codes, e.g. the toric codes and surface codes (cf. \cite{Z} and \cite{F1}), can be considered as special cases of homological codes. But, while they are LDPC they are all far from being good.

 Z\'emor \cite{Z}, based on the known examples and on intuition coming
 from graph theory, made the following suggestion:

\begin{question*}[Z\'emor \cite{Z} ] Let $\mathcal C$ be an $\big[[n, k, d]\big]$ homological quantum code. Is it always true that $kd^2\le n^{1 + o(1)}$? (or, in the notation of $r = \frac kn$ and $\delta = \frac dn$, $r\delta^2 \le n^{-2+o(1)}$)

\end{question*}

In \cite{F2}, Fetaya essentially proved that this is indeed true for codes coming from 2-dimensional surfaces.  We prove that it is not the case for codes coming from 4-dimensional manifolds.  

\begin{thm} \label{main} There exist $\vare, \vare', \vare'' > 0$ and a sequence of 4-dimensional hyperbolic manifolds $M$ with triangulations $X$ such that the associated homological quantum codes constructed in $C^2(X, \bbz_2) = C_2 (X, \bbz_2)$ are $\big[[ n, \vare' n, n^{\vare{''}}]\big] $ codes and so:
$$ kd^2\ge n^{1+\vare}.$$
\end{thm}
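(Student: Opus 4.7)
The plan is to take a tower of congruence covers of a fixed closed arithmetic hyperbolic $4$-manifold, triangulate them uniformly, and estimate the rate and distance of the resulting CSS codes in degree $2$.

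\textbf{Construction.} I would start with a cocompact arithmetic lattice $\Ga_0 \subset SO(4,1)$ defined by an admissible quadratic form of signature $(4,1)$ over a totally real number field $F \ne \bbq$, so that $M_0 = \Ga_0 \backslash \bbh^4$ is a closed hyperbolic $4$-manifold. Let $\{\Ga_\alpha\}$ be a sequence of principal congruence subgroups with levels tending to infinity, $M_\alpha = \Ga_\alpha \backslash \bbh^4$ the corresponding covers. Fix a $\Ga_0$-equivariant smooth triangulation $\tilde X$ of $\bbh^4$ with bounded simplicial geometry and pass to quotients $X_\alpha$. Then $n_\alpha := |X_\alpha^{(2)}|$ is proportional to $[\Ga_0 : \Ga_\alpha]$, hence to $\Vol(M_\alpha)$.

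\textbf{Rate.} Gauss--Bonnet for closed hyperbolic $4$-manifolds gives $\chi(M_\alpha) = c \cdot \Vol(M_\alpha)$ with $c > 0$. Combining $b_0 = b_4 = 1$ with Poincar\'e duality $b_1 = b_3$ yields $b_2(M_\alpha) = \chi(M_\alpha) - 2 + 2 b_1(M_\alpha)$. Choosing $\Ga_0$ of a suitable arithmetic type, Matsushima's formula together with Bergeron--Clozel type vanishing forces $b_1(M_\alpha; \bbq) = 0$, and the $\bbz_2$-torsion contribution is at worst sublinear in $\Vol(M_\alpha)$. Hence $k = b_2(M_\alpha; \bbz_2) \ge \vare' n_\alpha$ for some fixed $\vare' > 0$ and all large $\alpha$.

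\textbf{Distance.} One needs both the $\bbz_2$-$2$-systole (minimum Hamming weight of a nontrivial $2$-cycle) and the $\bbz_2$-$2$-cosystole (minimum Hamming weight of a nontrivial $2$-cocycle) of $X_\alpha$ to be at least $n_\alpha^{\vare''}$. Poincar\'e duality with $\bbz_2$ coefficients identifies $2$-cocycles of the primal triangulation with $2$-cycles of the dual cell decomposition (which also has bounded local geometry), so it suffices to bound the $2$-systole on both complexes. Bounded simplicial geometry makes the Hamming weight of a cellular $2$-cycle comparable, up to a universal constant, to the hyperbolic $2$-volume of its support; thus the problem reduces to the Riemannian claim that the geometric $\bbz_2$-$2$-systole of $M_\alpha$ is at least $c \cdot \Vol(M_\alpha)^{\vare''}$.

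\textbf{Main obstacle.} This polynomial systolic inequality is the hard part. The naive injectivity-radius argument---lift a short $2$-cycle to $\bbh^4$ where it bounds, then push the filling down---only yields a lower bound of order $\log \Vol(M_\alpha)$, since the injectivity radius of congruence covers grows only logarithmically. To obtain a polynomial gain I would exploit that $2$-cycles sit in codimension $2$ in a $4$-manifold: Gromov's linear isoperimetric inequality in $\bbh^4$ produces a filling $3$-chain of volume comparable to the area of the cycle, and a localization argument---forcing the filling to lie in a thin tubular neighborhood of the cycle rather than in a ball of diameter comparable to the whole area---lets the filling descend to $M_\alpha$ whenever the area is polynomially smaller than $\Vol(M_\alpha)$. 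A parallel argument on the dual complex handles the cosystole. Verifying the localized filling estimate and checking that the combinatorial and geometric $2$-systoles agree up to bounded factors is where the real work lies, and it is this step that ultimately determines the exponent $\vare''$.
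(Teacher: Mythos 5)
Your construction and rate estimate are essentially the paper's, though you complicate the rate step unnecessarily: since $b_2 = \chi - 2 + 2 b_1 \ge \chi - 2$, a lower bound on $b_2$ follows immediately from Gauss--Bonnet and requires no Matsushima or Bergeron--Clozel vanishing to control $b_1$ (having $b_1 > 0$ only helps). The reduction of the cosystole to a systole on the dual complex, and the comparison between Hamming weight and Riemannian area under bounded geometry, also match the paper.

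The genuine gap is in the distance estimate, and it is the heart of the matter. You correctly observe that the injectivity radius of congruence covers grows only logarithmically in the volume, but you then reject the ``injectivity-radius argument'' as yielding only a $\log \Vol$ bound and propose instead a localized filling via Gromov's isoperimetric inequality, which you leave unproven and which is not obviously salvageable. This misses the key mechanism the paper relies on: Anderson's theorem (\cite{A}), which via the monotonicity formula for minimal surfaces in hyperbolic space shows that a homologically non-trivial $i$-cycle has $i$-volume at least the volume of a hyperbolic $i$-ball of radius $R$, where $R$ is the injectivity radius. The crucial point is that for $i \ge 2$ this ball volume grows \emph{exponentially} in $R$, roughly $e^{(i-1)R}$. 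So a merely logarithmic bound $R \ge c \log \Vol M$ already yields the polynomial bound $Sys_2(M) \gtrsim e^{cR} \gtrsim (\Vol M)^{\epsilon}$. In other words, what you dismiss as ``naive'' is, once one uses the \emph{volume} of the ball rather than its radius (the minimal-surface monotonicity argument, not a lift-and-fill diameter argument), exactly the polynomial systole bound you need, and it applies symmetrically to $2$-cycles and $2$-cocycles (alias $(D-2)$-cycles of the dual complex) since $D - i = 2 \ge 2$ as well. Without Anderson's theorem or a substitute, your proposal does not close the distance estimate, and the alternative filling argument you sketch is not supported.
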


Theorem 1 will be deduced from the following geometric result.

\begin{thm}  There is a constant $\epsilon > 0$, a closed hyperbolic 4-manifold $M_0$ and a sequence of finite sheeted covers
$M_j \rightarrow M_0$ with $\Vol_4 M_j \rightarrow \infty$ obeying the following estimates:

1. The dimension of $H_2(M_j, \mathbb{Z}_2)$ is $\ge (1/100) \Vol_4 M_j$.

2. Every homologically non-trivial mod 2 2-cycle in $M_j$ has area $\ge (\Vol_4 M_j)^{\epsilon}$.

\end{thm}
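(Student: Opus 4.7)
I would take $M_0 = \mathbb{H}^4/\Gamma_0$, where $\Gamma_0 < \mathrm{PO}(4,1)$ is a torsion-free arithmetic lattice obtained from the integer units of a rational quadratic form of signature $(4,1)$; let $\Gamma_j \triangleleft \Gamma_0$ be the principal congruence subgroups at primes $\mathfrak{p}_j$ with $\mathrm{Nm}(\mathfrak{p}_j)\to\infty$, and put $M_j = \mathbb{H}^4/\Gamma_j$. For Part (1), the Chern--Gauss--Bonnet theorem for hyperbolic 4-manifolds gives $\chi(M_j) = (3/4\pi^2)\Vol_4(M_j)$; combined with Poincar\'e duality, $\chi(M_j) = 2 - 2b_1 + b_2$, so
\[
\dim H_2(M_j;\mathbb{Z}_2)\ \ge\ b_2(M_j;\mathbb{Q})\ \ge\ (3/4\pi^2)\Vol_4(M_j) - 2\ \ge\ (1/100)\Vol_4(M_j)
\]
for $j$ large (since $3/(4\pi^2) > 1/100$). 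Pleasantly, no vanishing theorem for $b_1$ is required.

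\textbf{Part (2): systole bound.} The plan combines three ingredients. \emph{(A) Injectivity radius.} Establish the Buser--Sarnak-type bound $\operatorname{inj}(M_j) \ge c_1 \log \Vol_4(M_j)$: for $\gamma \in \Gamma_j \setminus \{I\}$, integrality together with the congruence condition forces $\|\gamma - I\|_{\mathrm{op}} \ge c_2\,\mathrm{Nm}(\mathfrak{p}_j)$ at the archimedean place, while translation length $\ell(\gamma)$ satisfies $\|\gamma\|_{\mathrm{op}} \le C e^{\ell(\gamma)}$; since $[\Gamma_0{:}\Gamma_j]$ is polynomial in $\mathrm{Nm}(\mathfrak{p}_j)$, this yields $\ell(\gamma) \ge c_1 \log \Vol_4(M_j)$. \emph{(B) Diameter and filling of minimal 2-cycles in $\mathbb{H}^4$.} By the monotonicity formula, any minimal 2-cycle $\Sigma$ in $\mathbb{H}^4$ satisfies $\operatorname{area}(\Sigma \cap B(x,r)) \ge c \sinh^2 r$ for each $x \in \Sigma$, forcing $\operatorname{diam}(\Sigma) \le C_3 \log \operatorname{area}(\Sigma)$. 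A standard linear isoperimetric argument in a CAT$(-1)$ space (e.g.\ via convex-hull filling) then shows $\Sigma$ bounds a 3-chain supported in an $O(\log \operatorname{area}(\Sigma))$-ball. \emph{(C) Descent.} Choose $\Sigma$ to be a minimum-area representative of a nontrivial class in $H_2(M_j;\mathbb{Z}_2)$, of area $A$. Then $\Sigma$ and its hyperbolic filling sit inside a ball of radius $\le C_4 \log A$; if $2C_4 \log A < \operatorname{inj}(M_j)$, this ball lifts isometrically to $\mathbb{H}^4$ and the filling descends to a 3-chain in $M_j$ bounding $\Sigma$, contradicting nontriviality. Hence $C_4 \log A \ge (1/2)\operatorname{inj}(M_j) \ge (c_1/2) \log \Vol_4(M_j)$, giving $A \ge \Vol_4(M_j)^\epsilon$ with $\epsilon = c_1/(2C_4) > 0$.

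\textbf{Main obstacle.} I expect the delicate step to be ingredient (B)---the simultaneous control of the diameter of a minimal 2-cycle and of the support of a filling, both by a logarithm of the area. The diameter bound exploits the exponential volume growth of $\mathbb{H}^4$ through the monotonicity formula, and it is precisely what allows the entire descent argument in (C) to be carried out inside a single injectivity-radius ball (rather than requiring a ball-by-ball gluing of fillings that may fail to respect the $\Gamma_j$-action). Once this localization is secured, matching the two logarithmic scales---the $O(\log A)$ from (B) against the $c_1 \log \Vol_4(M_j)$ from (A)---yields the polynomial exponent $\epsilon > 0$ directly.
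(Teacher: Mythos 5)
Your proposal follows the same overall route as the paper: Gauss--Bonnet--Chern for the rate, a Buser--Sarnak-type injectivity radius lower bound for congruence covers, and the monotonicity formula for minimal surfaces to lower-bound the $2$-systole. The constant $3/(4\pi^2)$ is correct, and passing through $b_2(\mathbb{Q})$ via universal coefficients is a fine substitute for the paper's direct $\mathbb{Z}_2$-Euler-characteristic computation. However, there is one genuine error in the setup and one unnecessary detour.

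\textbf{Setup error.} You take $\Gamma_0$ to come from ``a rational quadratic form of signature $(4,1)$,'' but any such form in five variables is isotropic over $\mathbb{Q}$ (Meyer's theorem / Hasse--Minkowski), so by Godement's compactness criterion the resulting lattice is \emph{non-cocompact}: $M_0$ would be a finite-volume cusped manifold, not a closed one. This breaks several steps (closed manifolds are needed for Gauss--Bonnet in the stated form, for Anderson-type systolic estimates, and because cusps carry arbitrarily short closed geodesics, killing the injectivity radius bound). The paper avoids this by working over $\mathbb{Z}[\sqrt 2]$ with the form $-\sqrt2\,x_0^2 + \sum_{i=1}^4 x_i^2$: at the nontrivial Galois embedding the form becomes definite, forcing cocompactness. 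Your sketch of (A) then still works with this correction, though the norm estimate $\|\gamma - I\|\ge c\,N$ must be argued slightly more carefully since $\mathbb{Z}[\sqrt2]$ is dense in $\mathbb{R}$; you need to use boundedness at the other archimedean place (or, as the paper does, a word-length growth argument).

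\textbf{Unnecessary detour in (B)--(C).} The filling argument is superfluous. Once you have monotonicity for a minimum-area representative $\Sigma$ of a nontrivial class, applied in $M_j$ around a regular point $x\in\Sigma$ out to radius $r=\operatorname{inj}(M_j)=R$, you immediately get $\operatorname{area}(\Sigma)\ge\operatorname{area}(\Sigma\cap B(x,R))\ge \Vol_2 B^2_{\mathrm{hyp}}(R)\ge c\,e^{R}\ge c\,(\Vol_4 M_j)^\epsilon$. There is no need for a diameter bound or for constructing a $3$-chain filling: the monotonicity formula alone (this is exactly Anderson's theorem, cited in the paper) closes the argument. Incidentally, even granting your route, the simpler observation ``a homologically nontrivial cycle cannot fit inside a contractible ball of radius $<R$'' replaces the CAT$(-1)$ filling construction entirely; and the quantity $\sinh^2 r$ you quote is the wrong growth rate for $2$-dimensional hyperbolic balls (the correct one is $\sim e^{r}$, i.e., $2\pi(\cosh r - 1)$), though this affects only constants, not the conclusion.
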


A crucial ingredient in the proof is a theorem of Anderson \cite{A} which says that every homologically non-trivial $i$-cycle in a $D$-dimensional hyperbolic manifold has volume at least the volume of a hyperbolic $i$-ball of radius $R$, where $R$ is the injectivity radius of the manifold. Note that when $i=1$, this gives a linear bound on the volume, while for $i > 1$ it gives an exponential bound.  So we get much better lower bounds when $i > 1$.
For homological quantum codes, one needs lower bounds on both $i$-cycles and $i$-cocycles.  By Poincar\'e duality (see Section \ref{codesyst}), one wants lower bounds on $i$-cycles and $(D-i)$-cycles.  That is why we chose to work with $D=4$ and $i=2$, which is the smallest dimension where $i$ and $D-i$ are more than 1.

We should note however that the existence of LDPC quantum codes with parameters $\big[[n, \vare' n, n^{\vare{''}}]\big]$ is not new.  In [TZ], Tillich and Z\'emor discovered such codes with parameters $\big[[n, \vare' n, n^{0.5}]\big]$.  In fact, for the codes we construct, $\epsilon'' \le 0.3$ - see Remark \ref{3/10bound}.  The point of the current work is that our codes are homological.  This brings back the systolic geometry  to the race of finding good LDPC quantum codes, if such codes really exist.

The paper is organized as follows.  In Section 2, we describe the connection between distance of codes and systoles.  We deduce Theorem 1 from Theorem 2, postponing the proof of Theorem 2 to the next two sections.  In Section 3, we estimate the dimension of the homology groups using Euler characteristic arguments.  In Section 4, we estimate the sizes of cycles in congruence covers.  Along the way, we give lower bounds on the injectivity radius of the congruence covers.  This section is making a crucial use of results of Anderson \cite{A} in hyperbolic geometry.  As we expect this paper to have a diverse audience, we tried whenever we can to elaborate a bit on some methods even when they are well known to experts.  In particular, in Section 5 we explain the basic idea behind Anderson's theorem.

{\bf Acknowledgements.} The authors acknowledge support from the Sloan foundation, NSF, ERC and ISF.

\section{Codes and systoles} \label{codesyst}

Suppose that $(M^n, g)$ is a closed Riemannian manifold equipped with a triangulation $X$.  In the introduction, we recalled the $CSS$ code corresponding to i-dimensional chains in $X$: this is the code $(B_i, B^i)$.  We would like to understand how the 
distance of this code is related to the geometry of $(M, g)$.  

The $i$-dimensional systole of $(M^n, g)$ with coefficients in $\bbz_2$ is the infimal volume of a homologically non-trivial Lipschitz $i$-cycle in $(M^n, g)$.  We denote it by $Sys_i(M^n, g)$.  Below, we will briefly recall what these words mean.  The systole is a quantity from Riemannian geometry which is analogous to the distance of a code.  In particular, we recall the following result connecting systoles and codes.

\begin{prop} \label{distsyst} Suppose that $M_0^D$ is a closed $D$-dimensional manifold equipped with a Riemannian metric $g_0$ and a triangulation $X_0$.  Let $M \rightarrow M_0$ be a finite sheeted cover, and let $g$ and $X$ be the pullbacks of $g_0$ and $X_0$.  Then the distance of the code $(B_i(X), B^i(X))$ obeys the inequality

\begin{equation}
 d( B_i(X), B^i(X)) \ge c(M_0, g_0, X_0) \min \left( Sys_i (M, g), Sys_{D-i}(M,g) \right),
 \end{equation}
 
 where $c(M_0, g_0, X_0)$ is a constant.

\end{prop}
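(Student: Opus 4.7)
My plan is to translate the combinatorial distance of the CSS code into a Riemannian quantity, namely the systoles in complementary dimensions. Using the identities $B_i^\perp = Z^i$ and $(B^i)^\perp = Z_i$ already recorded in the introduction, the distance equals
\[
d(B_i(X), B^i(X)) = \min\bigl\{\, wt(c) : c \in (Z_i \setminus B_i) \cup (Z^i \setminus B^i)\,\bigr\}.
\]
The two pieces of this union will be treated separately: non-trivial $i$-cycles give a lower bound in terms of $Sys_i(M,g)$, and non-trivial $i$-cocycles give, via Poincar\'e duality, a lower bound in terms of $Sys_{D-i}(M,g)$. Taking the minimum of the two bounds is what produces the claimed inequality.

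\emph{Cycle case.} I would geometrically realize a simplicial $i$-cycle $c \in Z_i(X,\bbz_2)\setminus B_i(X,\bbz_2)$ as a Lipschitz mod $2$ $i$-cycle in $(M,g)$ by summing the $i$-simplices in its support. Because $g$ and $X$ are pullbacks of $g_0$ and $X_0$, every $i$-simplex of $X$ is isometric to one of the finitely many $i$-simplices of $X_0$, so its Riemannian $i$-volume is at most a constant $V_i^{\max}$ depending only on $(X_0,g_0)$. The area of the realization is therefore at most $V_i^{\max}\cdot wt(c)$. Since simplicial and Lipschitz mod $2$ homology of $M$ agree, the realization remains homologically non-trivial, so its area is at least $Sys_i(M,g)$, and we obtain $wt(c) \ge Sys_i(M,g)/V_i^{\max}$.

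\emph{Cocycle case.} I would pass to the dual cell decomposition $X^*$ of $X$ (for instance via the standard barycentric subdivision construction). Each $i$-simplex $\sigma$ of $X$ has a dual $(D-i)$-cell $\sigma^*\subset M$; locality of the construction guarantees that the dual cells of $X^*$ are pullbacks of those of $X_0^*$, so each has $(D-i)$-volume at most a constant $V^*_{D-i}$ depending only on $(X_0,g_0)$. At the chain level over $\bbz_2$, Poincar\'e duality identifies the $i$-cochain complex of $X$ with the $(D-i)$-chain complex of $X^*$ via $\sigma \mapsto \sigma^*$, exchanging $\delta$ and $\partial$. Hence a non-trivial cocycle $\gamma \in Z^i\setminus B^i$ corresponds to a non-trivial $(D-i)$-cycle $\gamma^*$ in $X^*$ with $wt(\gamma^*) = wt(\gamma)$; realizing $\gamma^*$ as a Lipschitz $(D-i)$-cycle and comparing to the systole yields $wt(\gamma) \ge Sys_{D-i}(M,g)/V^*_{D-i}$. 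Setting $c(M_0,g_0,X_0) = 1/\max(V_i^{\max}, V^*_{D-i})$ combines the two estimates into the stated inequality.

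The routine pieces are the volume bounds $V_i^{\max}$ and $V^*_{D-i}$, which are uniform across the tower precisely because $g$ and $X$ are pulled back from $(M_0, g_0, X_0)$. The step that needs the most care is the cocycle side: one must fix a dual cell decomposition that is functorial under the covers $M\to M_0$ (so the same constant $V^*_{D-i}$ works for every $M$) and check that Poincar\'e duality is implemented cell-by-dual-cell at the chain level in characteristic two. Both are standard but notationally delicate; with these in hand, the rest of the proof is mechanical.
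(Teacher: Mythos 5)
Your proposal is correct and follows essentially the same route as the paper: split the distance over the cycle and cocycle pieces, bound cycle weight below via the realization as a Lipschitz $i$-cycle and uniformly bounded simplex volumes in the pullback triangulation, and handle cocycles by passing to the Poincar\'e-dual cell complex $X'$ (pulled back from $M_0$), under which coboundary becomes boundary and weight-preserving duality reduces the estimate to $Sys_{D-i}(M,g)$. The paper organizes this as two lemmas and cites a reference for the dual polyhedral structure, but the decomposition, the volume-versus-weight comparison, and the use of a functorial dual complex are all the same.
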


Let us now recall Lipschitz chains and cycles, to clarify the definition of the systole.
A Lipschitz $i$-chain with coefficients in $\bbz_2$ is a finite sum $\sum_j a_j f_j$ where $a_j \in \bbz_2$ and $f_j$ is a Lipschitz map from the standard $i$-simplex $\Delta^i$ to $M$.  A Lipschitz 1-chain is a bunch of parametrized curves in $M$, and a Lipschitz $i$-chain is a bunch of parametrized $i$-simplices.  We denote the Lipschitz $i$-chains by $C_{i, Lip} (M, \bbz_2)$.  There is a boundary map $\partial_{i, Lip}:C_{i, Lip} (M, \bbz_2) \rightarrow C_{i-1, Lip}(M, \bbz_2)$, defined by restricting each map $f_j$ to the $(i-1)$-simplices in the boundary of $\Delta^i$.  The Lipschitz $i$-cycles form the kernel of $\partial_{i, Lip}$.  Each $i$-cycle in $Z_i(X, \bbz_2)$ can be considered as a Lipschitz cycle, but most Lipschitz cycles in $(M^n, g)$ do not come from any element of $Z_i(X, \bbz_2)$.  The reader can visualize the $i$-cycles of $Z_i(X, \bbz_2)$ as surfaces made from the $i$-faces of $X$, whereas Lipschitz $i$-cycles do not have to lie in the $i$-skeleton of $X$.  

A standard result of topology says that the homology of the chain complex of Lipschitz chains is the same as the homology $H_i(X, \bbz_2)$.  In particular, an $i$-cycle in $Z_i(X, \bbz_2)$ is homologically non-trivial if and only if the corresponding Lipschitz cycle is homologically non-trivial.  

The volume of a Lipschitz map $f_j: \Delta^i \rightarrow (M,g)$, is defined to be the volume of the pullback metric $f_j^*(g)$ on $\Delta^i$.  We can also think of it as the $i$-dimensional volume of $f_j(\Delta^i)$, counted with multiplicity if the image covers some $i$-dimensional surface multiple times.  The volume of the Lipschitz chain $\sum_j a_j f_j$ is $\sum_j |a_j| \Vol_i f_j$.  

Now suppose that $\alpha$ is a chain in $C_i(X, \bbz_2)$.  Since we are working with mod 2 coefficients, we can abuse notation and think of $\alpha$ as a subset of the $i$-dimensional faces of $X$.  The weight of $\alpha$ is just the number of $i$-faces in $\alpha$.  
The volume of the Lipschitz chain corresponding to $\alpha$ is the sum of the volumes of the $i$-faces in $\alpha$.

Therefore, we get the following inequalities between the weight of $\alpha \in Z_i(X, \bbz_2)$ and the volume of $\alpha$ as a Lipschitz cycle.

$$ (\min_{F \subset X} \Vol_i F) wt(\alpha) \le \Vol \alpha \le (\max_{F \subset X} \Vol_i F) wt(\alpha). $$

\noindent In these formulas, the maximum or minimum is over all the $i$-faces of $X$.  

In Proposition \ref{distsyst}, we start with a closed manifold $(M_0, g_0)$ with a triangulation $X_0$.  Then we consider finite sheeted covers $M \rightarrow M_0$ with pullback metric $g$ and pullback triangulation $X$.  The maximum and minimum volumes of 
$i$-faces in $X$ are the same as in $X_0$, so they are uniformly bounded.  Therefore, the volume and weight of $\alpha$ agree up to a constant factor.  These observations prove the following lemma.

\begin{lemma} Suppose that $M \rightarrow M_0$ is a finite sheeted cover.  Suppose that $M_0$ is equipped with a metric $g_0$ and a triangulation $X_0$, and let $g$ and $X$ be the pullbacks.  Then for any $\alpha \in Z_i(X, \bbz_2)$, we have

$$wt(\alpha) \ge c_1(M_0, g_0, X_0) \Vol_i \alpha.$$

\end{lemma}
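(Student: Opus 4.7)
The plan is to turn the second displayed inequality just above the lemma, $\Vol \alpha \le (\max_{F \subset X} \Vol_i F) \, wt(\alpha)$, into the stated bound by pinning down an explicit constant that depends only on the base data. I would set
$$ c_1 \;:=\; \left(\max_{F_0 \in X_0^{(i)}} \Vol_i F_0\right)^{-1}, $$
a positive number determined by $(M_0, g_0, X_0)$, since $X_0$ has only finitely many $i$-faces, and then argue that this same constant also controls face volumes on every finite sheeted cover.

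The substantive step is to check that the pullback triangulation $X$ has no $i$-face larger than those of $X_0$. Since $g = \pi^{*} g_0$, the covering map $\pi : M \to M_0$ is a local isometry; and since $X$ is the pullback of $X_0$, each $i$-simplex of $X$ is a lift of some $i$-simplex $\sigma$ of $X_0$ (such lifts exist and are unique up to choice of sheet, because $\Delta^i$ is simply connected). Any such lift is an isometry from $(\Delta^i, \sigma^{*} g_0)$ onto the lifted simplex with the induced metric $g$, so every $i$-face $F$ of $X$ satisfies $\Vol_i F \le c_1^{-1}$.

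Given this, the lemma reduces to a one-line calculation: view $\alpha \in Z_i(X, \bbz_2)$ as the set of $i$-faces $F_1, \ldots, F_m$ on which it is supported, so that $wt(\alpha) = m$. By additivity of volume,
$$ \Vol_i \alpha \;=\; \sum_{j=1}^{m} \Vol_i F_j \;\le\; m \cdot c_1^{-1} \;=\; c_1^{-1} \, wt(\alpha), $$
which rearranges to the claim. I do not anticipate any serious obstacle; the whole content of the lemma is the bookkeeping observation that, because $\pi$ is a local isometry, the finite list of face volumes in $X_0$ persists uniformly through every finite cover, so passing to $M$ does not degrade the constant.
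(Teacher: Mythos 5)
Your proof is correct and follows essentially the same route as the paper: the paper derives the displayed bound $\Vol\alpha \le (\max_{F\subset X}\Vol_i F)\,wt(\alpha)$ and then observes that the face volumes in the pullback triangulation $X$ coincide with those in $X_0$, giving a uniform constant; you simply make the latter observation explicit by noting that each $i$-simplex of $X$ is an isometric lift of one in $X_0$. The only cosmetic difference is that you spell out why the lifts preserve volume ($\Delta^i$ is simply connected and $\pi$ is a local isometry), which the paper treats as evident.
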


As a corollary, we can bound the minimal weight of a homologically non-trivial cycle $\alpha$ in terms of the systole of $(M_i, g_i)$.

\begin{lemma} Suppose that $M \rightarrow M_0$ is a finite sheeted cover.  Suppose that $M_0$ is equipped with a metric $g_0$ and a triangulation $X_0$, and let $g$ and $X$ be the pullbacks.  Then the minimal weight of any $\alpha$ in $Z_i \setminus B_i$ is at least $c_2(M_0, g_0, X_0) Sys_i (M, g)$. 
\end{lemma}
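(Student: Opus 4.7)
The plan is to combine the previous lemma with the definition of the $i$-systole, using the standard comparison between simplicial and Lipschitz homology recalled earlier in the section.

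Let $\alpha \in Z_i(X, \bbz_2) \setminus B_i(X, \bbz_2)$. First I would view $\alpha$ as a Lipschitz $i$-chain in $(M, g)$ in the natural way: each $i$-face $F$ of $X$ is the image of an affine Lipschitz parametrization of the standard simplex $\Delta^i$, and $\alpha$ corresponds to the formal sum of those parametrizations for the faces in $\alpha$. Since the simplicial boundary map agrees with the Lipschitz boundary map on such chains, $\alpha$ is a Lipschitz $i$-cycle. The standard fact that the homology of the Lipschitz chain complex agrees with $H_i(X, \bbz_2)$ ensures that, because $\alpha$ is non-trivial in $Z_i/B_i$, the corresponding Lipschitz cycle represents a non-trivial class in $H_i(M, \bbz_2)$.

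By the definition of the $\bbz_2$-systole, any homologically non-trivial Lipschitz $i$-cycle has volume at least $Sys_i(M, g)$, so $\Vol_i \alpha \ge Sys_i(M, g)$. Now I would apply the preceding lemma, which states that $wt(\alpha) \ge c_1(M_0, g_0, X_0) \Vol_i \alpha$ for any $\alpha \in Z_i(X, \bbz_2)$. Concatenating the two inequalities gives
\[ wt(\alpha) \;\ge\; c_1(M_0, g_0, X_0)\, \Vol_i \alpha \;\ge\; c_1(M_0, g_0, X_0)\, Sys_i(M, g), \]
and taking $c_2(M_0, g_0, X_0) = c_1(M_0, g_0, X_0)$ yields the stated bound.

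There is essentially no obstacle here: the content sits in the previous lemma (which packaged the uniform upper/lower volume bounds on faces coming from the fact that $X$ is pulled back from $X_0$) and in the systole comparison. The only mild subtlety worth mentioning is the identification of the simplicial cycle $\alpha$ with a Lipschitz cycle representing the same $H_i(M, \bbz_2)$ class; this is the standard equivalence of simplicial and singular/Lipschitz homology, which is why the homological non-triviality hypothesis $\alpha \notin B_i$ transfers to the Lipschitz setting and legitimately invokes the definition of $Sys_i$.
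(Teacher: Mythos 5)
Your proposal is correct and follows exactly the route the paper intends: the lemma is stated as a direct corollary of the preceding weight-versus-volume bound, combined with the identification of simplicial cycles with Lipschitz cycles (already set up in the section) and the definition of the systole. Nothing to add.
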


This lemma is the first half of the proof of Proposition \ref{distsyst}.  To finish the proof, we have to give a lower bound for the weights of $\alpha$ in $Z^i \setminus B^i$.  We can do this using Poincar\'e duality for the manifold $M$.

Suppose that $M^D$ is a closed $D$-dimensional manifold with triangulation $X$.  Then there is a Poincar\'e dual polyhedral structure $X'$ on $M$.  There is a vertex of $X'$ in the center of each $D$-simplex of $X$.  There is an edge of $X'$ through the center of each $(D-1)$-face of $X$.  The $(D-1)$-face borders exactly two $D$-faces, and the edge goes from the center of one to the center of the other.  More generally, there is an $i$-face of $X'$ through the center of each $(D-i)$-simplex in $X$.  (For a description of dual polyhedral structures and Poincar\'e duality, see Chapter 5.2 of \cite{Sha}.)

Since each $i$-dimensional face of $X$ corresponds to a unique $(D-i)$-dimensional face of $X'$, we get a Poincar\'e duality isomorphism $P: C^i(X, \bbz_2) \rightarrow C_{D-i}(X', \bbz_2)$.  The key feature of Poincar\'e duality is that the coboundary map on cochains in $X$ corresponds to the boundary map on chains in $X'$.  In formulas, this means that for each $\alpha \in C^i(X, \bbz_2)$, 

$$P (\delta_i \alpha) = \partial_{D-i} (P \alpha). $$

In particular, Poincar\'e duality maps cocycles in $X$ to cycles in $X'$ and coboundaries in $X$ to boundaries in $X'$.  Therefore,
the minimum weight of any $\alpha \in Z^i(X, \bbz_2) \setminus B^i(X, \bbz_2)$ is the same as the minimal weight of any $\alpha$ in
$Z_{D-i}(X', \bbz_2) \setminus B_{D-i}(X', \bbz_2)$.
Therefore, we get the following inequalities between the weight of $\alpha \in Z^i(X, \bbz_2)$ and the volume of $P \alpha$ as a Lipschitz cycle:

$$ (\min_{F \subset X'} \Vol_{D-i} F) wt(\alpha) \le \Vol_{D-i} P \alpha \le (\max_{F \subset X'} \Vol_{D-i} F) wt(\alpha). $$

\noindent In these formulas, the maximum or minimum is over all the $(D-i)$-faces of $X'$.  

Now suppose again that $M$ is a finite sheeted cover of a closed manifold $M_0$, where $M_0$ is equipped with metric $g_0$ and triangulation $X_0$.  We let $X_0'$ be the Poincar\'e dual polyhedral structure for $X_0$.  We let $g$ be the pullback of $g_0$, $X$ be the pullback of $X_0$, and $X'$ be the pullback of $X_0'$.  Now $X'$ is still Poincar\'e dual to $X$.  Moreover, the maximum and minimum volumes of faces in $X'$ are the same as in $X_0'$.  

Therefore, we see that the weight of any $\alpha \in C^i(X, \bbz_2)$ is at least $c(M_0, g_0, X_0, X_0') \Vol_{D-i} P \alpha$.  
Next suppose that $\alpha \in Z^i(X, \bbz_2) \setminus B^i(X, \bbz_2)$.  Since Poincar\'e duality respects boundaries, it follows that
$P \alpha \in Z_i(X', \bbz_2) \setminus B_i(X', \bbz_2)$.  Therefore, 
the minimal weight of any $\alpha \in Z^i(X, \bbz_2) \setminus B^i(X, \bbz_2)$ is at least $c(M_0, g_0, X_0, X_0') Sys_{D-i}(M, g)$.  

This finishes the proof of Proposition \ref{distsyst}.  Proposition \ref{distsyst} is a bridge connecting geometric properties of Riemannian manifolds and codes.  Using this bridge, we can build interesting codes from interesting towers of Riemannian manifolds.  In particular we will prove the following result about congruence covers of hyperbolic 4-manifolds.  

\begin{thm}  \label{hypmain} There is a constant $\epsilon > 0$, a closed hyperbolic 4-manifold $M_0$ and a sequence of finite sheeted covers
$M_j \rightarrow M_0$ with $\Vol_4 M_j \rightarrow \infty$ obeying the following estimates.

1. The dimension of $H_2(M_j, \mathbb{Z}_2)$ is $\ge (1/100) \Vol_4 M_j$.

2. $Sys_2(M_j) \ge (\Vol_4 M_j)^\epsilon$.  In other words, every homologically non-trivial mod 2 2-cycle in $M_j$ has area $\ge (\Vol_4 M_j)^{\epsilon}$.

(We use the hyperbolic metric on each $M_j$ pulled back from $M_0$.)

\end{thm}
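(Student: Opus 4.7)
The plan is to realize $M_0$ as a closed arithmetic hyperbolic $4$-manifold and the $M_j$ as congruence covers. Concretely I would work with a quadratic form of signature $(4,1)$ that is anisotropic over a totally real number field $k\neq\bbq$, for example $q(x) = x_1^2+x_2^2+x_3^2+x_4^2-\sqrt{2}\,x_5^2$ over $k=\bbq(\sqrt 2)$; since $q$ is positive definite at the second real place of $k$, Godement's criterion shows $\Ga_0 = SO(q,\calo_k)$ is a cocompact lattice in $SO(q,\bbr)\cong SO(4,1)$. Passing to a torsion-free finite-index subgroup (Selberg's lemma), set $M_0 = \Ga_0\backslash\bbh^4$. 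For a sequence of prime ideals $I_j \subset \calo_k$ with $N(I_j)\to\infty$, let $\Ga_j = \Ga_0(I_j)$ be the principal congruence subgroup of level $I_j$, and put $M_j = \Ga_j\backslash\bbh^4$.

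For item 1 the plan is Gauss-Bonnet-Chern plus Poincar\'e duality. For any closed hyperbolic $4$-manifold, $\chi(M) = c_{\mathrm{GB}}\Vol_4 M$ with $c_{\mathrm{GB}} = 3/(4\pi^2) > 0.075$. Since $M_j$ is closed and orientable, $b_0(M_j) = b_4(M_j) = 1$ and $b_1(M_j) = b_3(M_j)$, whence
\[
b_2(M_j) \;=\; \chi(M_j) - 2 + 2b_1(M_j) \;\ge\; c_{\mathrm{GB}}\Vol_4 M_j - 2.
\]
Therefore $\dim H_2(M_j,\bbz_2) \ge b_2(M_j) \ge (1/100)\Vol_4 M_j$ once $\Vol_4 M_j$ is sufficiently large, and one may discard the finitely many small-volume exceptions.

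For item 2 the heart is a logarithmic lower bound on the injectivity radius: $\mathrm{inj}(M_j) \ge c_1 \log \Vol_4 M_j$. The standard arithmetic argument proceeds as follows. If $\gamma \neq \mathrm{id}$ lies in $\Ga_0(I_j)$, every entry of $\gamma - \mathrm{id}$ sits in $I_j$, so some nonzero entry $a \in I_j$ satisfies $|N_{k/\bbq}(a)| \ge N(I_j)$. The non-identity real embedding $\sigma$ of $k$ lands $\gamma$ in the compact group $SO(\sigma q,\bbr)\cong SO(5)$, so $|\sigma(a)| = O(1)$ and therefore $|a|\gtrsim N(I_j)$ at the identity embedding, giving $\|\gamma - \mathrm{id}\|_{\mathrm{op}} \gtrsim N(I_j)$. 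Every nontrivial $\gamma$ in the torsion-free cocompact lattice is hyperbolic, with $\|\gamma\|_{\mathrm{op}} \asymp e^{\ell(\gamma)}$ where $\ell(\gamma)$ is the translation length; hence $\ell(\gamma) \gtrsim \log N(I_j)$. Since $[\Ga_0:\Ga_j] \le |SO_5(\calo_k/I_j)| \le N(I_j)^{O(1)}$, $\Vol_4 M_j$ is polynomial in $N(I_j)$, and $\mathrm{inj}(M_j) \ge c_1 \log \Vol_4 M_j$ follows.

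Finally, Anderson's theorem (recalled right after the statement of Theorem 2 in the introduction) implies that any homologically non-trivial mod $2$ $2$-cycle in $M_j$ has area at least the area of a hyperbolic $2$-disc of radius $R_j = \mathrm{inj}(M_j)$, namely $2\pi(\cosh R_j - 1) \ge \tfrac12 e^{R_j}$ for $R_j$ large. Hence
\[
\mathrm{Sys}_2(M_j) \;\ge\; \tfrac12\exp(c_1\log\Vol_4 M_j) \;=\; \tfrac12(\Vol_4 M_j)^{c_1},
\]
yielding item 2 for any $\epsilon < c_1$. I expect the injectivity-radius step to be the main obstacle: one must handle the arithmetic of the totally real base field — tracking Galois conjugates and exploiting compactness of $SO(q)$ at the non-identity real place — and carefully relate matrix norms to hyperbolic translation lengths. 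The Euler-characteristic computation and the invocation of Anderson's theorem are by comparison routine.
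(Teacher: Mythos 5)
Your overall strategy is the same as the paper's: realize $M_0$ and its covers arithmetically via the quadratic form $x_1^2+\cdots+x_4^2-\sqrt2\,x_5^2$ over $\bbq(\sqrt2)$, with principal congruence covers; prove the rate bound via Gauss--Bonnet--Chern; prove a logarithmic lower bound on the injectivity radius; and then feed that into Anderson's theorem. The Euler--characteristic step and the invocation of Anderson are correct and essentially identical to the paper's.

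The injectivity-radius step, however, has a genuine gap where you assert $\|\gamma\|_{\mathrm{op}}\asymp e^{\ell(\gamma)}$. This is false with uniform constants: if $\gamma_0$ is a fixed hyperbolic element and $\delta_t$ is a boost of length $t$, then $\gamma_t:=\delta_t\gamma_0\delta_t^{-1}$ has $\ell(\gamma_t)=\ell(\gamma_0)$ fixed while $\|\gamma_t\|_{\mathrm{op}}\to\infty$. The true inequality $e^{\ell(\gamma)}\le\|\gamma\|_{\mathrm{op}}$ points the wrong way: a lower bound on $\|\gamma\|$ does not directly give a lower bound on $\ell(\gamma)$. To convert $\|\gamma\|\gtrsim N(I_j)$ into $\ell(\gamma)\gtrsim\log N(I_j)$, you must first conjugate $\gamma$ by some $\delta\in\Gamma_0$ so that the point of minimal displacement lies in a fixed compact fundamental domain $F$; then $\ell(\gamma)=\ell(\delta\gamma\delta^{-1})\ge d(o,\delta\gamma\delta^{-1}\,o)-2\Diam F$, and a $KAK$ argument gives $d(o,g\,o)\gtrsim\log\|g\|-O(1)$. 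Crucially, for this to close you need $\delta\gamma\delta^{-1}$ to still lie in $\Gamma_j$, i.e.\ you need $\Gamma_j\tle\Gamma_0$. The paper performs exactly this conjugation (phrased via a Dirichlet domain and a word-length lemma rather than operator norms) and explicitly flags where normality of $\Gamma_N$ is used; that step is not optional, and your writeup skips it. Your own remark that ``carefully relate matrix norms to hyperbolic translation lengths'' is the main obstacle is accurate -- but filling it requires precisely the normality-plus-fundamental-domain ingredient above, not a blanket $\asymp$.

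A smaller, purely stylistic difference: to lower-bound the size of a non-identity $\gamma\in\Gamma_0(I_j)$, you use $|N_{k/\bbq}(a)|\ge N(I_j)$ for a nonzero entry $a$ of $\gamma-\mathrm{id}$, together with compactness of $SO(\sigma q)$ to control $|\sigma(a)|$ and hence force $|a|\gtrsim N(I_j)$. The paper instead writes $\gamma=A+B\sqrt2$ with integer matrices $A,B$, observes directly that some entry of $A-\mathrm{Id}$ or $B$ has absolute value $\ge N-1$, and bounds the growth of these integer coordinates under products of generators. Your version generalizes more cleanly to arbitrary totally real fields and arbitrary ideals; the paper's is more elementary for $\bbz[\sqrt2]$ and rational levels $N$. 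Both are correct once the conjugation issue above is repaired.
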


Using this theorem, we can quickly construct codes proving Theorem \ref{main}.  We fix a triangulation $X_0$ of $M_0$.  We let
$X_j$ be the pullback triangulation of $M_j$.  Then we consider the quantum code $(B_2(X_j, \bbz_2), B^2(X_j, \bbz_2))$.

In this code, the spaces $B_2, B^2$ are subspaces of $C_2(X_j, \bbz_2)$.  This is a vector space whose dimension $n$ is equal to the number of 2-faces in $X_j$.  We let $D_j$ be the degree of the cover $M_j \rightarrow M_0$.  The number of 2-faces in $X_j$ is $D_j$ times the number of 2-faces in $X_0$.  Up to a factor $C(M_0, X_0)$, $n$ is equal to $D_j$.  Also, up to a constant factor, $D_j$ is equal to $\Vol M_j$.  So $n \le c_1 \Vol M_j$.

The dimension of the code is $k = \Dim H_2(X_j, \bbz_2) = \Dim H_2(M_j, \bbz_2) \ge 1/100 \Vol M_j$.  Therefore, $k \ge c_2 n$.  In other words, these codes have a linear rate.

Finally, Proposition \ref{distsyst} implies that the distance of the code is at least $c_3 Sys_2(M_j) \ge c_4 n^\epsilon$.  

We will prove Theorem \ref{hypmain} in the next sections using hyperbolic geometry.

\section{Euler characteristic of Hyperbolic manifolds}

The Euler characteristic of a hyperbolic manifold can be computed by the Gauss-Bonnet-Chern theorem.

\begin{thm} (Allendoerfer-Weil, Chern \cite{C}) If $M^{2n}$ is a closed oriented manifold with Riemannian metric $g$, then
the Euler characteristic of $M$ is given by $c_n \int_M Pf(K_g) dvol_g$, where $c_n > 0$ is a dimensional constant, and $Pf(K_g)$ 
is the Pfaffian of the curvature of $g$.
\end{thm}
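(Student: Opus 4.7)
The plan is to prove this via Chern's classical transgression argument on the unit tangent sphere bundle, combined with the Poincaré--Hopf theorem. First I would set up the geometry: on the oriented orthonormal frame bundle $F \to M$, the Levi-Civita connection yields connection $1$-forms $\omega^i_j$ and curvature $2$-forms $\Omega^i_j = d\omega^i_j + \omega^i_k \wedge \omega^k_j$ (with $\Omega^i_j = -\Omega^j_i$). The Pfaffian $\text{Pf}(\Omega)$ is an $SO(2n)$-invariant polynomial of degree $n$ in the $\Omega^i_j$, so it descends from $F$ to a well-defined top-degree form on $M$; in an orthonormal local frame this form coincides, up to a universal dimensional factor, with $\text{Pf}(K_g)\, dvol_g$, and that factor is absorbed into $c_n$.

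Second, the heart of the proof: I would construct Chern's transgression form $\Phi$ on the unit tangent sphere bundle $\pi: STM \to M$. This is a $(2n-1)$-form with the key property $d\Phi = \pi^*\text{Pf}(\Omega)$. The explicit formula writes $\Phi$ as a weighted sum of wedge products of the connection forms $\omega^i_j$ pulled back to $STM$ together with components of the tautological section; verifying $d\Phi = \pi^*\text{Pf}(\Omega)$ is a direct computation using the structure equations and the second Bianchi identity $d\Omega^i_j = \Omega^i_k \wedge \omega^k_j - \omega^i_k \wedge \Omega^k_j$.

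Third, I would pick a generic smooth vector field $V$ on $M$ with only isolated nondegenerate zeros $p_1, \ldots, p_k$; off those zeros, $\hat V := V/|V|$ is a smooth section of $\pi$. Applying Stokes' theorem on $M_\epsilon := M \setminus \bigsqcup_i B_\epsilon(p_i)$,
\[
\int_{M_\epsilon} \hat V^*(\pi^*\text{Pf}(\Omega)) \;=\; \int_{\partial M_\epsilon} \hat V^*\Phi,
\]
and since $\pi \circ \hat V = \text{id}_M$ on $M_\epsilon$, the left-hand side converges to $\int_M \text{Pf}(\Omega)$ as $\epsilon \to 0$. A local normal-form computation near each zero $p_i$ shows that $\int_{\partial B_\epsilon(p_i)} \hat V^*\Phi$ converges to a fixed dimensional constant times $\text{ind}_{p_i}(V)$, essentially because integrating $\Phi$ over a small geodesic sphere on which $\hat V$ has local winding degree $\text{ind}_{p_i}(V)$ produces that degree times the volume of the unit $(2n-1)$-sphere. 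By the Poincaré--Hopf theorem $\sum_i \text{ind}_{p_i}(V) = \chi(M)$, and rearranging gives the theorem with a positive dimensional constant $c_n$.

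The main obstacle is the construction and verification of the transgression form $\Phi$: writing it in closed form so that $d\Phi = \pi^*\text{Pf}(\Omega)$ holds on the nose is a combinatorial exercise in connection and curvature forms, and extracting the correct universal constant from the boundary terms at each zero requires a careful local calculation in geodesic normal coordinates. Once those pieces are established, the remaining steps are standard Stokes and the Poincaré--Hopf index theorem.
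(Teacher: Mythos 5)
The paper does not actually prove this theorem: it is the classical Gauss--Bonnet--Chern theorem, and the authors simply cite Chern's 1944 paper \cite{C} and then deduce corollaries from the statement. So there is no in-paper proof to compare your argument against.

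That said, your sketch is a faithful outline of Chern's original transgression proof, and the main ideas are correctly arranged: realizing $\mathrm{Pf}(\Omega)$ on $M$ via Chern--Weil theory on the orthonormal frame bundle; constructing the transgression $(2n-1)$-form $\Phi$ on the unit sphere bundle with $d\Phi=\pi^{*}\mathrm{Pf}(\Omega)$ using the structure equations and Bianchi identity; pulling back by $\hat V = V/|V|$ off the zero set of a generic vector field; applying Stokes on $M_{\epsilon}$; and identifying the boundary contributions with local indices, then invoking Poincar\'e--Hopf. Two points to be careful about if you were to write this out fully. First, the induced orientation on $\partial M_{\epsilon}$ is opposite to the natural orientation of the small spheres, so the signs in the boundary sum must be tracked to get $+\chi(M)$ rather than $-\chi(M)$. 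Second, the assertion that $\int_{\partial B_{\epsilon}(p_i)}\hat V^{*}\Phi$ tends to $\Vol(S^{2n-1})\cdot\mathrm{ind}_{p_i}(V)$ rests on the fact that in geodesic normal coordinates the connection forms $\omega^{i}_{j}$ vanish at $p_i$, so that as $\epsilon\to 0$ the pullback of $\Phi$ degenerates to the fiberwise volume form of $S^{2n-1}$, and the integral is then governed by the degree of $\hat V\colon\partial B_{\epsilon}(p_i)\to S^{2n-1}$; this local normalization is also what fixes $c_n$ and shows $c_n>0$. With these details supplied, your argument is exactly the one in \cite{C}.
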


For a hyperbolic 2n-manifold, $Pf(K_{hyp})$ is a constant, and we see that the Euler characteristic of $(M^{2n}, hyp)$ is 
$c'_n Vol_{2n} (M, hyp)$, for some constant $c'_n$.  We will evaluate this constant, and we will see that it is never zero, and that
the sign of $c'_n$ is $(-1)^n$. 

\begin{cor} If $M^{2n}$ is a closed oriented hyperbolic manifold, then the Euler characteristic of $M$ is $(-1)^n \cdot 2  \Vol M / \Vol S^{2n}$.  
\end{cor}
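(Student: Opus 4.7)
The plan is to pin down the universal constant $c'_n$ by evaluating the Gauss-Bonnet-Chern formula on a single well-understood model, the round sphere $S^{2n}$. The key observation is that on any space of constant sectional curvature $\kappa$ the curvature tensor is rigidly given by $R_{ijkl} = \kappa(g_{ik}g_{jl} - g_{il}g_{jk})$, so $Pf(K_g)$ reduces to a pointwise constant depending only on $\kappa$ and $n$.

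First I would observe that $Pf(K_g)$ is a polynomial of degree exactly $n$ in the curvature (it is the Pfaffian of a $2n \times 2n$ antisymmetric matrix of 2-forms, hence a polynomial of degree $n$, not $2n$, in the entries). Therefore replacing the curvature tensor by its negative multiplies the Pfaffian by $(-1)^n$. Since the hyperbolic and spherical constant curvature tensors differ exactly by a sign, this gives the pointwise identity
$$ Pf(K_{hyp}) = (-1)^n \, Pf(K_{sph}). $$

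Next I apply the Gauss-Bonnet-Chern theorem to $(S^{2n}, sph)$. Using $\chi(S^{2n}) = 2$ I obtain $c_n \, Pf(K_{sph}) = 2/\Vol S^{2n}$. Substituting into the hyperbolic case, for any closed oriented hyperbolic $2n$-manifold $M$,
$$ \chi(M) = c_n \int_M Pf(K_{hyp}) \, dvol = (-1)^n \, c_n \, Pf(K_{sph}) \, \Vol M = (-1)^n \cdot \frac{2 \, \Vol M}{\Vol S^{2n}}, $$
which is the desired formula. As a sanity check, for $n=1$ this recovers the classical Gauss-Bonnet formula $\chi = -\Vol M/(2\pi)$ for hyperbolic surfaces, and for $n=2$ it recovers $\chi = 3\Vol M / (4\pi^2)$ for hyperbolic 4-manifolds.

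The one mildly subtle step is the sign-reversal identity, which hinges on the Pfaffian being homogeneous of degree $n$ rather than degree $2n$ in the curvature; beyond that there is no real obstacle, since the argument is simply a universal-constant comparison between the two model spaces of constant curvature $\pm 1$.
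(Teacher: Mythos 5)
Your proof is correct and takes essentially the same approach as the paper: normalize the Gauss--Bonnet--Chern constant by evaluating on the round sphere, then transport to hyperbolic space using the sign flip $Pf(K_{hyp}) = (-1)^n Pf(K_{sph})$. The only difference is that you spell out \emph{why} the Pfaffian picks up $(-1)^n$ (it is homogeneous of degree $n$ in the curvature), whereas the paper simply cites Chern's formula; your added sanity checks for $n=1,2$ are also correct.
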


\begin{proof} For the unit sphere, the Gauss-Bonnet-Chern theorem gives $2 = c_n Pf(K_{S^{2n}}) \Vol S^{2n}$.  Therefore,
$c_n Pf(K_{S^{2n}}) = 2 / \Vol S^{2n}$.  The curvature of hyperbolic space is negative the curvature of the unit sphere: $K_{H^{2n}} = - K_{S^{2n}}$.
From the formula for the Pfaffian in \cite{C}, it follows that $Pf(K_{H^{2n}}) = (-1)^n Pf(K_{S^{2n}})$.  
For $(M^{2n}, hyp)$, the Gauss-Bonnet-Chern theorem implies that the Euler characteristic is
$c_n Pf(K_{H^{2n}}) \Vol(M, hyp) = (-1)^n 2 (\Vol S^{2n})^{-1} \Vol (M, hyp)$.
\end{proof}

As a corollary, we see that for a hyperbolic 4-manifold $M$, the dimension of $H_2(M, \bbz_2)$ grows linearly with the volume of $M$.

\begin{cor} Suppose that $M$ is a connected closed hyperbolic 4-manifold with volume $V$. 
Then $\Dim H_2(M, \bbz_2)$ is at least $(2 / \Vol S^4) V - 2$.  
\end{cor}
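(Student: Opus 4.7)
The plan is to extract $\dim H_2(M,\bbz_2)$ directly from the Euler characteristic formula established in the preceding corollary. Since $\bbz_2$ is a field, the Euler characteristic can be computed as an alternating sum of Betti numbers with $\bbz_2$ coefficients:
\[
\chi(M)=\sum_{i=0}^{4}(-1)^i \dim_{\bbz_2} H_i(M,\bbz_2),
\]
while the preceding corollary tells us $\chi(M)=2V/\Vol S^4$.

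Next I would pin down the Betti numbers at the extreme degrees. Connectedness of $M$ gives $\dim H_0(M,\bbz_2)=1$, and because $M$ is a closed 4-manifold it admits a mod-2 fundamental class regardless of orientability, so $\dim H_4(M,\bbz_2)=1$. Then I would invoke Poincar\'e duality with $\bbz_2$ coefficients, which applies to any closed manifold (no orientability hypothesis needed when the coefficients are $\bbz_2$), to obtain $\dim H_1(M,\bbz_2)=\dim H_3(M,\bbz_2)$.

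Substituting these identities into the Euler characteristic expansion gives
\[
\tfrac{2V}{\Vol S^4}=\chi(M)=2-2\dim H_1(M,\bbz_2)+\dim H_2(M,\bbz_2),
\]
and rearranging yields
\[
\dim H_2(M,\bbz_2)=\tfrac{2V}{\Vol S^4}-2+2\dim H_1(M,\bbz_2)\;\geq\;\tfrac{2V}{\Vol S^4}-2,
\]
which is the claimed bound.

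There is essentially no obstacle here; the argument is a two-line consequence of the previous corollary. The only technical point to be careful about is that $M$ need not be orientable (hyperbolic 4-manifolds can be non-orientable), so one must use the mod-2 fundamental class and the mod-2 form of Poincar\'e duality rather than their integral counterparts.
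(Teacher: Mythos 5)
Your argument is correct and follows essentially the same route as the paper: expand the Euler characteristic $\chi(M)=2V/\Vol S^4$ as an alternating sum of mod-2 Betti numbers, use connectedness to get $\dim H_0=\dim H_4=1$, and discard the nonnegative odd-degree contributions. The only difference is your extra invocation of mod-2 Poincar\'e duality to identify $\dim H_1=\dim H_3$, which is harmless but unnecessary; the paper simply notes that the odd dimensions contribute negatively, i.e.\ $\dim H_1,\dim H_3\ge 0$, which already yields the bound.
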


\begin{proof} By the Corollary above, the Euler characteristic of $M$ is $(2 / Vol S^4) V$.  The Euler characteristic of
$M$ is equal to $\sum_{d=0}^4 (-1)^d \Dim H_d(M, \mathbb{Z}_2)$.  Since $M$ is connected, $H_0(M_i)$ and $H_4(M_i)$ have dimension 1.  Since
the odd dimensions contribute negatively, we get $\Dim H_2(M_i) \ge (2 / Vol S^4) V - 2$.

\end{proof}

For all $V$ sufficiently large, we see that $\Dim H_2(M, \bbz_2) \ge (1/100) V$.  This proves the lower bound on the dimension of $H_2$ in Theorem \ref{hypmain}.

\section{Congruence covers of hyperbolic manifolds}  

Recall that the group of orientation-preserving isometries of hyperbolic space $\bbh^D$ is the connected component of the identity of $SO(D,1)$, which we denote $SO^o(D,1)$.  We will make closed hyperbolic manifolds by quotienting $\bbh^D$ by discrete subgroups of $SO^o(D,1)$.  The discrete subgroups we use will be arithmetic lattices.  
We define arithmetic lattices below, but let us begin with a couple of examples.

The simplest example is $SO(D,1, \bbz) \subset SO(D,1, \bbr)$.  Here we define $SO(D, 1, \bbr) \subset GL(D+1, \bbr)$ as the group of matrices that preserve the quadratic form $-x_0^2 + x_1^2 + ... + x_D^2$ on $\bbr^{D+1}$, and $SO(D, 1, \bbz)$ as the subgroup of matrices that have integer entries.  It turns out that the quotient $SO(D, 1, \bbz) \backslash \bbh^D$ has finite volume but is not compact (cf. Chapter 6 of \cite{Mo}). Since we will use closed hyperbolic manifolds, we need a trickier example.

Let $f$ be the quadratic form $- \sqrt 2 x_0^2 + \sum_{i=1}^D x_i^2$.  Let $SO_f \subset GL(D+1, \bbr)$ be the subgroup of matrices that preserve $f$ and have determinant 1.  The form $f$ has signature $(D,1)$ and so $SO_f$ is conjugate to $SO(D,1)$ in $GL(D+1, \bbr)$.  We fix an isomorphism $SO_f \rightarrow SO(D,1)$ and so we think of the component of the identity, $SO^o(f)$, as the group of orientation-preserving isometries of $\bbh^D$.  

Now we let $SO_f( \bbz [ \sqrt2 ]) \subset SO_f$ be the subgroup of matrices with entries in $\bbz [\sqrt2]$.  The group $SO_f (\bbz [\sqrt2])$ is a discrete cocompact subgroup of $SO_f$.  It is not obvious that $SO_f (\bbz [\sqrt2])$ is either discrete or cocompact.  There is a short elegant proof that it is discrete, which we include.  For a proof that it is cocompact, see Chapter 6 of \cite{Mo}.

The ring $\bbz [\sqrt2] \subset \bbr$ is not discrete.  There are two group homomorphisms from $\bbz [\sqrt2]$ to $\bbr$.  One sends $\sqrt 2$ to $\sqrt 2$, the other sends $\sqrt 2$ to $- \sqrt 2$.  Let $\phi_1$, $\phi_2$ be the two homomorphisms, where $\phi_1( \sqrt 2) = \sqrt 2$ and $\phi_2 (\sqrt 2) = - \sqrt 2$.  Now the image $(\phi_1, \phi_2) \bbz [\sqrt2] \subset \bbr^2$ is discrete.

If we apply the map $\phi_2$ to each coefficient of a matrix $m \in SO_f( \bbz[ \sqrt 2])$, we do not get an element of $SO_f$.  Instead we get an element of $SO_{\tilde f}$ where $\tilde f$ is the quadratic form $+ \sqrt 2 x_0^2 + \sum_{i=1}^D x_i^2$.  
So $\phi_2$ induces a group homomorphism $\phi_2: SO_f(\bbz [\sqrt2 ]) \rightarrow SO_{\tilde f}$.  Combining $\phi_1$ and $\phi_2$, we get an injective homomorphism $SO_f(\bbz [\sqrt2]) \rightarrow SO_f \times SO_{\tilde f}$.  
The image of this homomorphism is discrete.  Moreover, $\tilde f$ has signature $(D+1, 0)$, and so $SO_{\tilde f}$ is conjugate to $SO(D+1)$ and is compact.  Therefore, $SO_f (\bbz [\sqrt2])$ is a discrete subgroup of $SO_f$.

We let $\Gamma_1 = SO_f (\bbz [\sqrt2]) \subset SO_f$, and we think of $SO(f)$ acting on hyperbolic space.  Next we define subgroups $\Gamma_N \subset \Gamma_1$ as follows.  A matrix in $SO_f(\bbz [\sqrt2])$ can be written (uniquely) in the form $A + B \sqrt 2$, where $A, B$ are matrices with integer coefficients.  Such a matrix lies in $\Gamma_N$ if and only if $A = Id$ modulo $N$ and $B = 0$ modulo $N$.  The subgroups $\Gamma_N$ are called principal congruence subgroups of $\Gamma_1$.  The groups $\Gamma_N$ is a normal subgroup of $\Gamma_1$, because it is the kernel of the reduction mod $N$ map from $SO_f(\bbz [\sqrt2])$ to $SO_f( (\bbz / N \bbz) [\sqrt2])$.  

For all sufficiently large $N$, $\Gamma_N$ acts without fixed points on $\bbh^D$, which we will prove below.  Therefore, we can define the hyperbolic manifolds $M_N = \Gamma_N \backslash \bbh^D$ for all sufficiently large $N$.  Taking $D=4$, these hyperbolic manifolds are the examples in Theorem \ref{hypmain}.

The index of $\Gamma_N$ in $\Gamma_1$ is equal to the cardinality of the image of $SO_f(\bbz[\sqrt2])$ in $SO_f ( (\bbz/ N \bbz) [\sqrt2])$.  This cardinality is at most the cardinality of the set of $(D+1) \times (D+1)$ matrices with coefficients in the ring $(\bbz / N \bbz) [\sqrt 2]$, and so it is at most $N^{2 (D+1)^2}$.  A more accurate estimate is $N^{2 \Dim SO_f}$, but we do not need it.

\subsection{Injectivity radius estimates}

Let $M$ be a closed hyperbolic $D$-manifold.  The injectivity radius of $M$ is at least $R$ if and only if, for every point $p \in M$, the metric ball around $p$ with radius $R$ is isometric to the hyperbolic $D$-ball of radius $R$.  Next we need a lower bound for the injectivity radius of our hyperbolic manifolds $M_N = \Gamma_N \backslash \bbh^D$.  
Similar estimates have appeared before in some particular cases in \cite{BS} and \cite{KSV}, and more generally in Section 3.C.6 of \cite{Grsis}).  Let us prove it now directly for our concrete examples.  Later we give a very general estimate.

\begin{prop} \label{injrad} Let $D$, $\Gamma_1, \Gamma_N$ be as above.  Then there are constants $c_1, c_2 > 0$ so that the injectivity radius of $M_N$ is at least $c_1 \log N - c_2 $.
\end{prop}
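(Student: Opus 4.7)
The plan is to show that for every $\gamma \in \Gamma_N\setminus\{1\}$ and every $\tilde p \in \bbh^D$, $d(\tilde p,\gamma\tilde p) \ge c\log N - c'$; the injectivity radius of $M_N$, which is half the infimum of such distances, will then satisfy the claimed bound.

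First, a geometric reduction. Since $\Gamma_1$ is cocompact, fix a compact fundamental domain $F_1$ for its action on $\bbh^D$. For any $\tilde p \in \bbh^D$, choose $h \in \Gamma_1$ with $h\tilde p \in F_1$. Because $\Gamma_N \triangleleft \Gamma_1$, $h\gamma h^{-1}$ is a non-identity element of $\Gamma_N$, and $d(\tilde p,\gamma\tilde p) = d(h\tilde p,(h\gamma h^{-1})h\tilde p)$. So I may assume $\tilde p$ lies in the fixed compact set $F_1$, independent of $N$.

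Next comes the algebraic estimate via Galois conjugation, which is the heart of the argument. For $\gamma \in \Gamma_N\setminus\{1\}$, write $\gamma = I + NC$ with $0 \ne C \in M_{D+1}(\bbz[\sqrt 2])$. The Galois image $\phi_2(\gamma)$ lies in the compact group $SO_{\tilde f}$, so $\|\phi_2(\gamma) - I\|$ is bounded by a constant and hence $\|\phi_2(C)\| \le C_0/N$. For any nonzero entry $c = a + b\sqrt 2$ of $C$, the Galois conjugate entry $c' = a - b\sqrt 2$ satisfies $|c'| \le C_0/N$, while $cc' = a^2 - 2b^2$ is a nonzero integer, so $|cc'| \ge 1$ and $|c| \ge N/C_0$. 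Applied to the trace: $\text{tr}(\gamma) - (D+1) \in N\bbz[\sqrt 2]$, and its Galois conjugate $\text{tr}(\phi_2(\gamma)) - (D+1)$ is bounded by $2(D+1)$ (since $\phi_2(\gamma)$ has eigenvalues on the unit circle), so whenever $\text{tr}(\gamma) \ne D+1$ we get $|\text{tr}(\gamma) - (D+1)| \ge N^2/C_0'$.

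Two conclusions follow. First, $\Gamma_N$ is torsion-free for large $N$: a torsion element has $|\text{tr}(\gamma)| \le D+1$ (a sum of roots of unity) and $\text{tr}(\gamma) \equiv D+1 \pmod N$, which for large $N$ forces $\text{tr}(\gamma) = D+1$, hence all eigenvalues equal $1$, hence $\gamma = I$ by semisimplicity. Every non-identity $\gamma \in \Gamma_N$ is therefore loxodromic, with translation length $\ell(\gamma) = \log \mu$, where $\mu > 1$ is its top eigenvalue. Second, by Newton's identities the power sums $\text{tr}(\gamma^k)$ for $k = 1,\ldots,D+1$ determine the eigenvalues of $\gamma$; so for $\gamma \ne I$ some $\text{tr}(\gamma^k)$ differs from $D+1$. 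Applying the trace estimate to $\gamma^k \in \Gamma_N$ yields $|\text{tr}(\gamma^k) - (D+1)| \ge N^2/C_0'$, while $|\text{tr}(\gamma^k)| \le \mu^k + D$ since the remaining eigenvalues sit on the unit circle. This forces $\mu^k \gtrsim N^2$, hence $\ell(\gamma) \ge \tfrac{2}{D+1}\log N - \text{const}$. Combined with $d(\tilde p,\gamma\tilde p) \ge \ell(\gamma)$, this finishes the proof, with $c_1 = 1/(D+1)$.

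The main obstacle is the conversion from the algebraic ``$\gamma$ is large'' statement to a geometric lower bound on the translation length. A bound on $\|\gamma - I\|$ alone is insufficient: a loxodromic screw motion with a large rotational part and a tiny hyperbolic part can have huge matrix norm but tiny translation length. That is why the argument must be pushed through trace invariants, and why the degenerate case $\text{tr}(\gamma) = D+1$ forces one to bootstrap through the higher power sums $\text{tr}(\gamma^k)$ using Newton's identities.
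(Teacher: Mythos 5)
Your proof is correct, but it takes a genuinely different route from the one in the paper. The paper's argument is geometric-group-theoretic: it shows (via a Milnor--\v Svarc-type comparison between word length and displacement, using a compact Dirichlet fundamental domain) that the injectivity radius of $M_N$ is bounded below by a constant times the minimal word length $w_S(n')$ of non-identity $n' \in \Gamma_N$ with respect to a fixed finite generating set $S$ of $\Gamma_1$; it then observes that a word of length $w$ in $S$ has matrix entries of size at most $C^w$, while any non-identity element of $\Gamma_N$ has an entry of size at least $N-1$, giving $w_S(n') \gtrsim \log N$. Your argument instead bounds the translation length $\ell(\gamma) = \log\mu$ directly, using the arithmetic structure: the Galois conjugate $\phi_2(\gamma^k)$ lies in the compact group $SO_{\tilde f}$, so for any power $k$ with $\text{tr}(\gamma^k)\ne D+1$ the norm computation in $\bbz[\sqrt2]$ forces $|\text{tr}(\gamma^k) - (D+1)| \gtrsim N^2$, and combining with $|\text{tr}(\gamma^k)| \le \mu^k + D$ yields $\ell(\gamma) \gtrsim \log N$. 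Your observation that a lower bound on $\|\gamma - I\|$ alone does not control $\ell(\gamma)$ (because of screw motions with large rotational part) is a real subtlety, and your fix via Newton's identities --- guaranteeing some $k \le D+1$ with $\text{tr}(\gamma^k)\ne D+1$ --- is exactly the right way to push through the trace invariants; the paper sidesteps this issue entirely because it bounds the displacement $d(x_0, n' x_0)$ at a fixed base point rather than the translation length. Your route is more arithmetic and yields the explicit constant $c_1 = 1/(D+1)$; the paper's is more elementary and generalizes more directly (it is essentially the method behind Proposition \ref{injrad2}). Two small remarks: your initial reduction placing $\tilde p$ in a compact fundamental domain is never actually used, since the inequality $d(\tilde p,\gamma\tilde p)\ge\ell(\gamma)$ you invoke at the end holds for all $\tilde p$; and you implicitly use cocompactness of $\Gamma_N$ to exclude parabolic elements, which is worth saying explicitly when you assert that every non-identity $\gamma\in\Gamma_N$ is loxodromic.
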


Since the volume of $M_N$ grows like the index of $\Gamma_N$ in $\Gamma_1$, which grows polynomially in $N$ we get the following corollary.

\begin{cor} Let $D$, $\Gamma_1, \Gamma_N$ be as above.  The there is a constant $c > 0$ so that the injectivity radius of $M_N$ is at least $c \log \Vol M_N$ for all $N$ sufficiently large.
\end{cor}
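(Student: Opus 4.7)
The plan is to deduce this corollary from Proposition \ref{injrad} together with the polynomial upper bound on $[\Gamma_1:\Gamma_N]$ that was already established in the preceding discussion. The substantive geometric input is the logarithmic injectivity radius bound $\mathrm{inj}(M_N)\ge c_1\log N - c_2$; the corollary is then a two-line algebraic rearrangement in which $\log N$ is replaced by $\log\Vol M_N$.

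First I would write $\Vol M_N = [\Gamma_1:\Gamma_N]\cdot \Vol M_1$, which is valid because $M_N\to M_1$ is a finite cover of degree $[\Gamma_1:\Gamma_N]$ with the pulled-back hyperbolic metric. Next, I would invoke the index bound recorded earlier: since $\Gamma_N$ is the kernel of the reduction map $SO_f(\bbz[\sqrt 2])\to SO_f((\bbz/N\bbz)[\sqrt 2])$, the index $[\Gamma_1:\Gamma_N]$ is at most the cardinality of $(D+1)\times(D+1)$ matrices over $(\bbz/N\bbz)[\sqrt 2]$, which is at most $N^{2(D+1)^2}$. Taking logarithms gives
\[
\log\Vol M_N \;\le\; 2(D+1)^2\,\log N \;+\; \log\Vol M_1,
\]
and therefore $\log N \ge \frac{1}{2(D+1)^2}\bigl(\log\Vol M_N - \log\Vol M_1\bigr)$.

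Combining this inequality with Proposition \ref{injrad} yields
\[
\mathrm{inj}(M_N) \;\ge\; c_1\log N - c_2 \;\ge\; \frac{c_1}{2(D+1)^2}\,\log\Vol M_N \;-\; C,
\]
for some constant $C$ depending only on $\Vol M_1$, $c_1$, $c_2$ and $D$. Since $[\Gamma_1:\Gamma_N]\to\infty$ as $N\to\infty$, so does $\Vol M_N$, hence $\log\Vol M_N\to\infty$. Thus for $N$ sufficiently large the additive constant $C$ is absorbed by shrinking the leading coefficient slightly, producing a single constant $c>0$ with $\mathrm{inj}(M_N)\ge c\log\Vol M_N$, as required.

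There is essentially no obstacle at this step — the only ingredients are Proposition \ref{injrad} and the explicit polynomial index bound, both already in hand. The genuine work lies upstream, in the proof of Proposition \ref{injrad} itself (and downstream, in feeding this logarithmic injectivity radius into Anderson's theorem to obtain the systolic lower bound of Theorem \ref{hypmain}).
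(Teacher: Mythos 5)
Your proof is correct and follows exactly the same route the paper intends: the paper offers only a one-sentence justification ("Since the volume of $M_N$ grows like the index of $\Gamma_N$ in $\Gamma_1$, which grows polynomially in $N$"), and your write-up simply fills in that reasoning by combining Proposition \ref{injrad} with the index bound $[\Gamma_1:\Gamma_N]\le N^{2(D+1)^2}$ and the covering-volume identity.
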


\begin{proof} Let $\pi: \bbh^D \rightarrow M_N = \Gamma_N \backslash \bbh^D $ be the quotient map.  
Let $R$ be the injectivity radius of $M_N$. Since the injectivity radius of $M_N$ is less than $2R$, there is some $p \in M$ so that the ball of radius $2R$ around $p$ is not isometric to a ball of radius $2R$ in $\bbh^D$.  Let $p'$ be a preimage of $p$ in $\bbh^D$.  Let $B_{\bbh^D}(p', 2R)$ denote the ball around $p'$ of radius $2R$ in $\bbh^D$.  The map $\pi: B_{\bbh^D}(p', 2R) \rightarrow B_M(p, 2R)$ must not be an isometry.  The only way it can fail to be an isometry is that two points of $B_{\bbh^D}(p', 2R)$ lie in the same $\Gamma_N$ orbit.  The distance between these two points must be $< 4 R$.  Therefore, there exists a non-identity element $n \in \Gamma_N$ and a point $x \in \bbh^D$ so that $d(x, nx) < 4 R$.  (Here $d$ denotes the distance in $\bbh^D$.) 

Let $F$ be a fundamental domain for $\Gamma_1$.  Using the symmetries of the situation, we will show that we can arrange for the point $x$ above to lie in $F$.  There exists some $\gamma \in \Gamma_1$ so that $\gamma x \in F$.  Since $\gamma$ acts by isometries, we have

$$ d(x, nx) = d( \gamma x, \gamma (nx) ) = d \left( \gamma x, (\gamma n \gamma^{-1}) (\gamma x) \right). $$

Now we define $x' = \gamma x \in F$ and $n' = \gamma n \gamma^{-1} \in \Gamma_N$, and we see that
$d(x', n' x') < 4R$.  (Here we used that $\Gamma_N$ is normal.)

Next we use the fact that $\Gamma_1$ is cocompact.  This implies that the fundamental domain has finite diameter $\Diam(F)$.  We fix a point $x_0 \in F$, and we are guaranteed that $d(x_0, x') \le \Diam(F)$.  Since $n'$ acts isometrically, $d(n' x_0, n' x') \le \Diam(F)$ as well.  By the triangle inequality we get a bound on $d(x_0, n' x_0)$:

$$ d (x_0, n' x_0) \le 4 R + 2 \Diam(F). \eqno{(1)}$$

Everything we said so far makes sense for any fundamental domain.  It is convenient to choose a particular fundamental domain.  We fix a point $x_0$, and we consider the Dirichlet fundamental domain defined by

$$ F := \{ x \in \bbh^D | d(x, x_0) \le d(\gamma x, x_0) \textrm{ for all } 1 \not= \gamma \in \Gamma_1 \}. $$

Clearly $x_0 \in F$.  Since $\Gamma_1$ is cocompact, $F$ is compact.  The boundary of $F$ is defined to be the set of points $x \in F$ so that $d(x, x_0) = d(\gamma x, x_0)$ for some $1 \not= \gamma \in \Gamma_1$.  We consider the translates $\gamma F$ for $\gamma \in \Gamma_1$.  Two such translates can intersect only at points in the boundary.  We say that $\gamma_1 F$ and $\gamma_2 F$ are adjacent if they intersect.

We can now prove that $\Gamma_1$ is finitely generated and describe a set of generators.  We let $S \subset \Gamma_1$ be the set of $\gamma \in \Gamma_1$ so that $\gamma F \cap F$ is non-empty.  Since $\Gamma_1$ acts properly discontinuously, there are only finitely many points of $\Gamma_1 x_0$ in any ball in $\bbh^D$.  Since $F$ is compact, we see that any compact set intersects only finitely many cells $\gamma F$.  In particular, it follows that $S$ is finite.  Also, $\gamma F \cap F = \gamma (F \cap \gamma^{-1} F)$, and so $\gamma \in S$ if and only if $\gamma^{-1} \in S$.  Let $s_1, s_2, ..., s_T$ be the elements of $S$.

The set $S$ generates $\Gamma_1$.  Indeed, let $\gamma \in \Gamma_1$.  Consider a path from $x_0$ to $\gamma x_0$.  This path is compact, so it intersects only finitely many cells $\gamma F$.  Using the path, we can choose a finite sequence of cells $F, \gamma_1 F, \gamma_2 F, ..., $ where consecutive cells are adjacent and the last cell is $\gamma F$.  By the definition of $S$, $\gamma$ is a finite product $s_{i_1} \circ ... \circ s_{i_w}$, with $s_{i_j} \in S$.

Recall that for each $\gamma \in \Gamma_1$, the word length $w_S(\gamma)$ is the shortest length of a product $s_{i_1} \circ ... \circ s_{i_w}$ which is equal to $\gamma$, with the $s_i \in S$.  We showed above that $w_S(\gamma) < \infty$ for any $\gamma \in \Gamma_1$.  By making the argument more quantitative, we can give an upper bound for $w_S(\gamma)$ in terms of $d(x_0, \gamma x_0)$.

\begin{lemma} \label{wordlength} Let $\Gamma_1$, $S$ be as above.  For any $\gamma \in \Gamma_1$, $w_S(\gamma) \le c_1 (d(x_0, \gamma x_0) + 1)$. 
\end{lemma}

\begin{proof} Since $\Gamma_1$ acts properly discontinuously, any unit ball contains a finite number of points of $\Gamma_1 x_0$.  Since $F$ is compact, any unit ball intersects a finite number of cells $\gamma F$, $\gamma \in \Gamma_1$.  Now since $\Gamma_1$ is cocompact, this upper bound is uniform over all unit balls.  So we may assume that each unit ball in $\bbh^D$ intersects at most $c_1$ cells $\gamma F$.  

Now take a path from $x_0$ to $\gamma x_0$ of length $d(x_0, \gamma x_0)$. Divide this path into $d(x_0, \gamma x_0) + 1$ segments of length at most 1.  Each of these segments intersects at most $c_1$ cells $\gamma F$.  So we can make a sequence of adjacent cells from $F$ to $\gamma F$ using at most $c_1  (d(x_0, \gamma x_0) + 1)$ cells.  By the definition of $S$, we see that $w_S (\gamma) \le c_1  (d(x_0, \gamma x_0) + 1)$.  \end{proof}

This lemma is all that we will use in the proof of the injectivity radius estimate, but for context it is  
useful to be aware of the following more general result.  

\begin{lemma} (See Theorem 3.6 in \cite{Bow}) Suppose that $\Gamma_0$ is a finitely generated group that acts isometrically, properly discontinuously, and cocompactly on a Riemannian manifold $X$.  Let $S$ be a (symmetric) generating set for $\Gamma_0$.  Let $x_0 \in X$.  Then there are constants $0 < c < C$ so that

$$ c d_X(x_0, g x_0) \le w_S(g) \le C d_X (x_0, g x_0) + C. $$

\end{lemma}

Applying Lemma \ref{wordlength} to equation (1), we see that

$$ w_S(n') \le C_1 R + C_2. $$

\noindent   Rearranging the formula, we get a lower bound

$$ R \ge c_1 w_S(n') - C_3. \eqno{(2)}$$

Next we prove a lower bound on the word length $w_S(n')$.

\begin{lemma} Let $D$, $\Gamma_1, \Gamma_N$, and $S$ be as above.  Then there is a constant $c > 0$ so that any non-identity element $n' \in \Gamma_N$ has $w_S(n') \ge c \log N$. 
\end{lemma}

\begin{proof} Let $n' $ be a non-identity element of $\Gamma_N$.  Write $n' = A' + B' \sqrt 2$ where $A', B'$ are matrices with integer coefficients.  Since $n'$ is not just the identity, one of the entries of $A'$ or $B'$ must have norm at least $N-1$.

Let $M_1 = A_1 + B_1 \sqrt 2$ and $M_2 = A_2 + B_2 \sqrt 2$ be matrices, where $A, B$ have integer coefficients.  Write the product as $M = A + B \sqrt 2$.  We write $| M_1 |$ for the maximum of the entries in $A_1$ or $B_1$, and $|M_2|$ and $|M|$ similarly.  Then a direct computation gives $|M| \le C |M_1| |M_2|$.  

Now let $s_1, s_2, ... s_T $ be the matrices in $S$.  Write $s_i = A_i + B_i \sqrt 2$ where $A_i, B_i$ are integer matrices.  Consider a product of $w$ elements of $S$: $g = s_{i_1} \circ ... \circ s_{i_w}$.  Using the product estimate in the last paragraph repeatedly, we get $| g | \le C^{w-1} (\max_{s_i \in S} |s_i|)^w \le C_2^w$.  Applying this argument to $n'$, we see that $N- 1 \le C_2^{w_S(n')}$.  Taking logarithms finishes the proof.
\end{proof}

Plugging into equation (2), we get $R \ge c_1 \log N - C_2. $  This finishes the proof of the Proposition.  \end{proof} 

\begin{remark} The proof of Proposition \ref{injrad} also shows that $\Gamma_N$ acts without fixed points for all $N$ large enough.  During the proof we showed that for any $x \in \bbh^D$ and any $\gamma \in \Gamma_N$, the distance from $x$ to $\gamma(x)$ is at least $c_1 \log N - c_2 $.  If $N$ is sufficiently large, this distance is positive, and so $\Gamma_N$ has no fixed points.
\end{remark}

For what is needed for our construction of quantum codes, the above example $SO_f(\bbz [\sqrt 2])$ and Proposition \ref{injrad} suffices.  But let us take the opportunity to put on record a general result of this kind which is valid for all simple Lie groups.

Recall that if $G$ is a simple Lie group and $\Gamma$ is a discrete subgroup, then $\Gamma$ is an arithmetic subgroup if there exists a number field $k$, a $k$-algebraic group $H$, and an epimorphism $\phi: H(k \otimes_\bbq \bbr) \rightarrow G$ with compact kernel such that $\phi( H(O) )$ is commensurable to $\Gamma$ where $H(O)$ is the $O$-points of $H$ with respect to some fixed embedding of $H$ into $GL(m)$.  ($H(O)$ depends on this embedding only up to commensurability.)  If $I$ is a non-zero ideal of $O$, then $H(O)_I$ is the kernel of the homomorphism from $H(O)$ to $H( O/I )$.  Then $\Gamma(I)$ is defined to be $\phi( H(O)_I ) \cap \Gamma$.  A subgroup of $\Gamma$ is called a congruence subgroup if it contains $\Gamma(I)$ for some (non-zero) ideal $I \subset O$.  The collection of congruence subgroups of $\Gamma$ does not depend on the embedding of $H$ in $GL(m)$.  We can now state:

\begin{prop} \label{injrad2} Let $G$ be a simple Lie group with maximal compact subgroup $K$, and let $X$ be the symmetric space $G/ K$.  Let $\Gamma$ be an arithmetic subgroup of $G$.  Then there exists a constant $c_1 > 0$ so that for every normal congruence subgroup $N \subset \Gamma$, the injectivity radius of $N \backslash X$ is at least $c_1 \log \Vol (N \backslash X) - c_2$.  
\end{prop}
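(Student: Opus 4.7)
The plan is to generalize Proposition \ref{injrad} by isolating its three ingredients: a setup via restriction of scalars, a Cartan-decomposition estimate replacing the word-length comparison, and a lower bound on matrix norms of non-identity elements of a congruence subgroup. By restriction of scalars I may assume $k = \bbq$, $H \subset GL_m$ is a $\bbq$-algebraic subgroup with $O = \bbz$, and $\Gamma$ is commensurable to $\phi(H(\bbz))$ (this only affects the final constants by additive terms). Any normal congruence subgroup $N \subset \Gamma$ contains a principal congruence subgroup $\Gamma(n) := \phi(H(\bbz)_{n\bbz})$ for some $n$; since $\Gamma/\Gamma(n)$ embeds into $GL_m(\bbz/n\bbz)$, we have $[\Gamma:N] \le [\Gamma:\Gamma(n)] \le C\, n^{m^2}$, so $\Vol(N\backslash X) \le C' n^{m^2}$. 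It therefore suffices to prove that the injectivity radius is at least $c_1 \log n - c_2$.

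The geometric input is the Cartan estimate
$$ d_X(x_0, g \cdot x_0) \ge c_3 \log \|g\|_{\mathrm{op}} - c_4 \qquad (g \in G \subset GL(m,\bbr)), $$
with $x_0 = K$, which follows from the $KAK$ decomposition since both sides are controlled (up to constants) by the largest coordinate of the Cartan projection of $g$. I then mimic Proposition \ref{injrad}: if the injectivity radius at some $\bar p \in N\backslash X$ is $R$, there exist $\delta \in N \setminus \{I\}$ and a lift $p$ of $\bar p$ with $d_X(p, \delta p) < 2R$; translating $p$ by some $\gamma \in \Gamma$ into a compact fundamental domain of diameter $D_0$ containing $x_0$, and using that $N$ is normal so $\gamma\delta\gamma^{-1} \in N$, the triangle inequality combined with the Cartan estimate yields
$$ \|\gamma\delta\gamma^{-1}\|_{\mathrm{op}} \le \exp(c_5 R + c_6). $$

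The hardest step is the matching lower bound. When $\gamma\delta\gamma^{-1}$ lies in $\Gamma(n)$, its entries are integers congruent to those of $I$ modulo $n$ but not all equal to the entries of $I$, forcing $\|\gamma\delta\gamma^{-1}\|_{\mathrm{op}} \ge n - O(1)$ and hence $R \ge c_1 \log n - c_2$, as desired. For a general normal congruence subgroup $N \supsetneq \Gamma(n)$ the element $\gamma\delta\gamma^{-1}$ need not lie in $\Gamma(n)$, and this is the main obstacle. The remedy is a structural fact about the congruence quotient: by strong approximation, for $n$ coprime to a fixed finite set of bad primes the group $\Gamma/\Gamma(n)$ is essentially a product of finite quasi-simple groups over the residue fields, so any proper normal subgroup $\bar N = N/\Gamma(n)$ of $\Gamma/\Gamma(n)$ is the preimage of a central subgroup of bounded order; equivalently, there exists a divisor $n' \mid n$ with $n/n' = O(1)$ such that $\Gamma(n') \subseteq N$ and every element of $N$ actually lies in a principal congruence subgroup at level $n'$. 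Reducing to level $n'$ costs only $\log(n/n') = O(1)$ in the logarithm, preserving the logarithmic estimate after absorbing the bad primes into $c_2$. In the non-cocompact case one replaces the compact fundamental domain by a Siegel set and restricts the argument to the thick part, where the injectivity radius coincides with the systole.
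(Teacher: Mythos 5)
Your overall strategy — reduce to $k=\bbq$ by restriction of scalars, replace the word-length comparison of Proposition \ref{injrad} by a Cartan ($KAK$) estimate $d_X(x_0,gx_0)\gtrsim\log\|g\|-O(1)$, use normality of $N$ to move the short displacement into a fixed fundamental domain, and then control the matrix entries — is exactly the generalization the paper has in mind. For $N=\Gamma(n)$ a principal congruence subgroup your argument is correct and is essentially what the paper means by ``the same as Proposition \ref{injrad}.'' The discussion of the non-cocompact case via Siegel sets is a sensible aside (the injectivity radius at the cusps is literally zero, so the statement has to be read in the thick part or as a systole bound).

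The genuine gap is in the last step, the reduction of an arbitrary normal congruence subgroup to the principal case. Two of your assertions there are not correct as stated. First, strong approximation gives $\Gamma/\Gamma(n)\cong H(\bbz/n\bbz)\cong\prod_p H(\bbz/p^{a_p}\bbz)$ (away from bad primes), but this is a product of quasi-simple groups only when $n$ is squarefree; for $a_p\ge 2$ the factor $H(\bbz/p^{a_p}\bbz)$ is an extension of $H(\bbf_p)$ by the nilpotent $p$-group $\Gamma(p)/\Gamma(p^{a_p})$, and that nilpotent kernel is a large non-central normal subgroup — so ``any proper normal subgroup is the preimage of a bounded central subgroup'' fails badly (take $\bar N=\Gamma(p)/\Gamma(p^2)$). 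Second, the conclusion you extract — that there is $n'\mid n$ with $n/n'=O(1)$ such that $\Gamma(n')\subseteq N$ \emph{and} every element of $N$ lies in $\Gamma(n')$ — literally says $N=\Gamma(n')$, i.e.\ every normal congruence subgroup is principal, which is false. What one actually needs (and what the paper outsources to \cite{LL}, page 459) is a sandwich with central slack: there is $m'$ with $(m')^{\dim H}\asymp[\Gamma:N]$ and $N\subseteq\Gamma(m')\cdot Z$ for a central subgroup $Z$ of bounded order, so that any non-central $\delta\in N$ lies in $\Gamma(m')$ up to a bounded central factor and hence has $\|\delta\|\gtrsim m'$. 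Deriving this from scratch via the structure of normal subgroups of $H(\bbz/p^a\bbz)$ is precisely the nontrivial input; as written your step neither proves it nor invokes a reference that does, so the proof has a hole at exactly the place the paper flags as requiring \cite{LL}.
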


For principal congruence subgroups, the proof is essentially the same as the proof of Proposition \ref{injrad}.  For arbitrary normal congruence subgroups, one should use the argument given in \cite{LL}, page 459, which shows that every normal congruence subgroup is ``very close'' to being a principal congruence subgroup.  It is worth noting that in Proposition \ref{injrad2}, we need the group $N$ to be both normal and a congruence subgroup.

Suppose that $N$ is normal but not congruence.  For every $D$, there exists an arithmetic lattice in $SO(D,1)$ which is mapped onto $\bbz$ (cf. \cite{Mill} or \cite{L2}).  Taking finite cyclic covers induced by the maps $\bbz \rightarrow \bbz / N \bbz$ will give normal covers with a bounded injectivity radius.

Now we consider examples that are congruence subgroups but not normal.  Fix $\gamma \in \Gamma$, an element of infinite order.  We consider a sequence of smaller and smaller ideals $I \subset O$.  For each $I$, we let $\pi_I: H(O) \rightarrow H(O/I)$.  Then we let $\Lambda_I$ be $\pi_I^{-1} ( \langle \pi_I(\gamma) \rangle )$.  In words, $\Lambda_I$ is the preimage of the subgroup generated by $\pi_I (\gamma)$.  Clearly $\Lambda_I$ contains $\gamma$ for all $I$.  Therefore, $\Lambda_I \backslash X$ contains a non-contractible loop of length independent of $I$.  But the $\Lambda_I$ are all congruence subgroups of $\Gamma$, and the index of $\Lambda_I$ in $\Gamma$ goes to infinity like a power of $[O : I]$.

\subsection{Anderson's systolic bound}

Finally, we need an important result from hyperbolic geometry that connects the systoles of a hyperbolic manifold and its injectivity radius. 

\begin{thm} (Anderson, \cite{A}) Let $(M^D, hyp)$ be a closed manifold with a hyperbolic metric.  Let $Z^i \subset M$ be
a homologically non-trivial $i$-cycle with coefficients in $\mathbb{Z}_2$.  Let $R$ be
the injectivity radius of $(M, hyp)$.  Then the volume of $Z$ is at least the volume of
a ball of radius $R$ in the $i$-dimensional hyperbolic space.  In particular, if $i \ge 2$ and $R \ge 1$, then
$Vol_i (Z) \ge c(i) e^{(i-1) R}$.  
\end{thm}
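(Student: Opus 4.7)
The plan is to prove the estimate by a monotonicity-formula argument of Federer--Fleming type, carried out in the universal cover. First I would reduce to the case that $Z$ is mass-minimizing inside its $\bbz_2$-homology class: a minimizer exists by standard compactness for flat $\bbz_2$-chains, has volume no larger than the original $Z$, and is still homologically non-trivial (hence has non-empty support). Pick a point $p$ in the support of this minimizer and lift to the universal cover $\pi\colon \bbh^D\to M$ via some $\tilde p\in \pi^{-1}(p)$. Because $R$ is the injectivity radius, $\pi$ restricts to an isometry $B(\tilde p,R)\to B(p,R)$, and the lift $\widetilde Z := \pi^{-1}(Z)\cap B(\tilde p,R)$ is an $i$-chain inside a genuine hyperbolic $D$-ball of radius $R$.

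The heart of the argument is the monotonicity statement that, for
\[
m(r) := \Vol_i\bigl(\widetilde Z\cap B(\tilde p,r)\bigr), \qquad 0<r\le R,
\]
the ratio $m(r)/v_i(r)$ is non-decreasing in $r$, where $v_i(r)$ is the volume of a hyperbolic $i$-ball of radius $r$. This is proved by a coning competitor. For almost every $r$, the slice $S_r := \widetilde Z\cap \partial B(\tilde p,r)$ is a rectifiable $(i-1)$-chain equal, by coarea, to $\partial(\widetilde Z\cap B(\tilde p,r))$. The geodesic cone $C_{\tilde p}(S_r)$ from $\tilde p$ over $S_r$ sits inside $B(\tilde p,r)\subset \bbh^D$ and has the same boundary $S_r$, so projecting down by $\pi$ one may replace $Z\cap B(p,r)$ by $\pi_* C_{\tilde p}(S_r)$ to produce a competitor cycle in the same homology class. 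Minimality of $Z$ therefore forces $m(r)\le \Vol_i\bigl(C_{\tilde p}(S_r)\bigr)$. A direct polar-coordinate computation in $\bbh^D$ gives
\[
\Vol_i\bigl(C_{\tilde p}(S_r)\bigr) = \frac{\int_0^r \sinh^{i-1}(s)\,ds}{\sinh^{i-1}(r)}\,\Vol_{i-1}(S_r),
\]
and the coarea inequality gives $m'(r)\ge \Vol_{i-1}(S_r)$. Combining these two produces the differential inequality $m'(r)/m(r) \ge \sinh^{i-1}(r)\big/\int_0^r \sinh^{i-1}(s)\,ds$, which integrates to the claimed monotonicity.

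Integrating monotonicity from $r\to 0^+$, where density-one behavior at a regular point of the minimizer (Allard) yields $m(r)/v_i(r)\to 1$, up to $r=R$ gives $m(R)\ge v_i(R)$, and hence $\Vol_i(Z)\ge m(R)\ge v_i(R)$. The quantitative ``in particular'' claim is then immediate from $v_i(R)=\omega_{i-1}\int_0^R \sinh^{i-1}(s)\,ds\ge c(i)\,e^{(i-1)R}$ for $i\ge 2$ and $R\ge 1$. The main obstacle is the geometric-measure-theoretic underpinning --- existence of a minimizer among $\bbz_2$ flat chains, rectifiability of almost every slice $S_r$, and density-one at regular points of the minimizer. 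These are all classical Federer--Fleming facts, so the genuine geometric content of the argument is the hyperbolic coning competitor and its explicit volume formula.
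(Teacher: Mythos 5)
Your proof follows the same route the paper sketches in Section~5: pass to a mass-minimizing flat $\bbz_2$-cycle, lift a neighborhood of a regular point of its support to a hyperbolic ball of radius $R$ via the injectivity-radius hypothesis, establish the hyperbolic monotonicity of the density ratio $m(r)/v_i(r)$ by the geodesic-cone competitor together with the coarea inequality, and then integrate from the density-one limit at $r\to 0^+$ out to $r=R$. The explicit cone-volume identity you wrote is exactly the paper's quotient $VHB_k(r)/VHS_{k-1}(r)$, so the argument is correct and essentially identical to the one in the paper.
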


\begin{remark} This theorem holds for any dimension $i$ in the range $1 \le i \le D-1$.  But the main interest is in the range $2 \le i \le D-1$.  
If $i=1$, then a 1-dimensional hyperbolic ball of radius $R$ is just an interval $[-R, R]$, and it has 1-dimensional volume $2R$.  But for
each $i \ge 2$, the $i$-volume of an $i$-dimensional hyperbolic ball of radius $R$ grows exponentially in $R$, roughly like $e^{(i-1) R}$.  
\end{remark}

\begin{remark} Anderson's original results are stated for integral cycles, but essentially the same proof works also with coefficients in $\bbz_2$ - see Section 5 where we discuss some of the main ideas of the proof.
\end{remark}

Now we have all the tools to prove Theorem \ref{hypmain}.

\begin{proof} Let $M_1$ and $M_N$ be as above.  By Proposition \ref{injrad}
 the injectivity radius of $M_N$ is $R_N \ge c \log \Vol M_N$.  By Anderson's theorem, every homologically non-trivial mod 2 2-cycle in $M_N$ has
area $\ge c e^{R_N} \ge c \Vol M_N^{\epsilon}$.  

On the other hand, using the Gauss-Bonnet-Chern theorem, we already proved that 
$\Dim H_2(M_N) \ge (2 / Vol S^4) \Vol M_N - 2$.  By taking $N$ sufficiently large, we get $\Dim H_2(M_N) \ge (1/100) \Vol M_N$ as desired.

\end{proof}

\begin{remark} \label{3/10bound} The proof shows that the distance of our code is at least $n^{\epsilon''}$ for some $\epsilon'' > 0$.  On the other hand, the distance is at most $O(n^{0.3})$.  This is because our lattice $\Gamma_N$ contains an arithmetic lattice $\Lambda_N$ of $SO(2,1)$.  (In fact,  every arithmetic lattice in $SO(4,1)$ contains an arithmetic lattice of $SO(2,1)$ - see \cite{L}.)  Hence every $M_N = \Gamma_N \backslash \bbh^4$ contains the surface $\Lambda_N \backslash \bbh^2 $.  While the volume of $\Gamma_N \backslash \bbh^4$ grows like $N^{20}$, the area of $\Lambda_N \backslash \bbh^2$ grows like $N^6$.  See Proposition 3.2 in \cite{B} for more details of a similar argument.  
\end{remark}

\section{On Anderson's Theorem}

The proof uses minimal surface theory.  A complete proof (including all the underlying results from minimal surface theory) is somewhat
long, but we can explain the basic ideas behind the proof.  We will explain first the original proof with $\bbz$ coefficients, and at the end we discuss what to do in the $\bbz_2$ case.

The first main idea is to replace $Z$ with a surface of minimal area $Z_{min}$ in its homology class.  A serious result from minimal surface theory
is that there is a kind of generalized surface - a stationary integral current - in the homology class of $Z$ with minimal volume.  
An  integral current is a generalization of an integral chain which can be somewhat more singular.  It suffices to prove a lower bound for $Vol_k Z_{min}$.  Since $Z$ is homologically non-trivial, $Z_{min}$
is not empty.  Minimal surfaces have special geometric properties that can be used to estimate the volume of $Z_{min}$.  

The proof is based on the hyperbolic generalization of the monotonicity theorem for minimal surfaces.  For context we first recall the
standard monotonicity theorem for minimal surfaces in $\mathbb{R}^n$.  

\begin{thm} \label{mono} (Monotonicity) Let $X$ be an integral k-chain in $\mathbb{R}^n$.  Suppose that $X$ has the smallest volume of any chain with
boundary $\partial X$.  Suppose that $\partial X$ lies in $\partial B^n(R)$.  Then the ratio

$$ \frac {\Vol_k (X \cap B^n(r))}{ \Vol_k B^k(r)}$$

\noindent is non-decreasing for $0 < r \le R$.  
\end{thm}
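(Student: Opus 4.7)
The approach is the classical cone-comparison argument coupled with the coarea formula, producing a differential inequality for the volume function. Set $V(r) := \Vol_k(X \cap B^n(r))$. Since $\Vol_k B^k(r)$ is a fixed dimensional constant times $r^k$, the conclusion is equivalent to showing that $V(r)/r^k$ is non-decreasing on $(0,R]$.

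First, slicing: for almost every $r \in (0,R)$, the intersection $X \cap \partial B^n(r)$ is a well-defined $(k-1)$-dimensional integral current, and the coarea formula yields
$$V'(r) \ge \Vol_{k-1}\bigl(X \cap \partial B^n(r)\bigr),$$
with possible strict inequality because the tangent $k$-planes of $X$ generally meet the sphere at an angle. Second, build a competitor: for such a regular $r$, let $C_r$ be the cone over the slice $X \cap \partial B^n(r)$ with apex at the origin. Then $C_r \cup (X \setminus \overline{B^n(r)})$ is an integral $k$-chain with the same boundary as $X$, so minimality of $X$ gives $V(r) \le \Vol_k(C_r)$. The cone volume is computed by integrating the $(k-1)$-volumes of its radial slices, which are rescalings of the outer slice by the factor $t/r$:
$$\Vol_k(C_r) = \int_0^r (t/r)^{k-1} \Vol_{k-1}\bigl(X \cap \partial B^n(r)\bigr)\,dt = \frac{r}{k}\Vol_{k-1}\bigl(X \cap \partial B^n(r)\bigr).$$

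Combining the two estimates gives $V(r) \le (r/k) V'(r)$, equivalently $\frac{d}{dr}\log V(r) \ge k/r$ wherever $V(r) > 0$, so $\frac{d}{dr}\bigl(r^{-k}V(r)\bigr) \ge 0$ in the sense of distributions. Integrating this establishes monotonicity of $V(r)/r^k$. Values of $r$ where $V(r) = 0$ are handled separately: the inequality forces $V \equiv 0$ on a neighborhood, where monotonicity is trivial.

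The main technical obstacle is making this argument rigorous for integral currents, which may carry a singular set: one needs that slices of integral currents are almost everywhere integral currents of dimension $k-1$, that the cone construction produces an integral current, and that the cone volume identity extends through singularities. These are all standard results in the Federer--Fleming theory of integral currents, and I would cite them rather than reconstruct them. A second, smaller point is verifying that the coarea inequality holds for integral currents in the stated form; this too is a standard slicing theorem.
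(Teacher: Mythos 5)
Your argument is the same as the paper's: slice at radius $r$, use the coarea inequality to get $V'(r)\ge\Vol_{k-1}(Y_r)$, use the cone over the slice as a competitor to get $V(r)\le(r/k)\Vol_{k-1}(Y_r)$, and combine into the differential inequality $\frac{d}{dr}\bigl(r^{-k}V(r)\bigr)\ge 0$. The only cosmetic difference is that the paper deliberately writes the cone-volume constant as $VB_k(r)/VS_{k-1}(r)$ instead of $r/k$, so that exactly the same two lines generalize verbatim to the hyperbolic case (Theorem 17); your formulation is equivalent in the Euclidean setting.
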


In this formula, $B^k(r)$ is the Euclidean k-ball of radius $r$.  If $X$ contains 0, and if $X$ is a smooth manifold in a small
neighborhood of 0, then the volume ratio goes to 1 as $r \rightarrow 0$.  Then the monotonicity theorem guarantees that the volume ratio is $\ge 1$ for all $r$.  So we see that if $0$ is a smooth point of $X$, then $\Vol_k X$  is at least the volume of the Euclidean k-ball of radius $R$.

We will explain the idea of the proof of the monotonicity theorem below.  Now we give the hyperbolic analogue.

\begin{thm} \label{hypmono} (Anderson \cite{A})  Let $X$ be an integral k-chain in the hyperbolic space $\mathbb{H}^n$.  
Suppose that $X$ has the smallest volume of any chain with boundary $\partial X$.  Let $B_{hyp}^n(R) \subset \mathbb{H}^n$ denote the hyperbolic
ball of radius $R$.  Suppose that $\partial X$ lies in 
$\partial B^n(R)$.  Then the ratio

$$ \frac {Vol_k (X \cap B_{hyp}^n(r))}{ Vol_k B_{hyp}^k(r)}$$

is non-decreasing for $0 < r \le R$.  

\end{thm}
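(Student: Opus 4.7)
The plan is to adapt the classical Euclidean monotonicity argument to hyperbolic geometry, replacing radial scaling in $\mathbb{R}^n$ by radial geodesics in $\mathbb{H}^n$ and the power $r^k$ by the hyperbolic ball volume. Introduce the three functions
$$V(r) := \Vol_k(X \cap B_{hyp}^n(r)), \qquad A(r) := \Vol_{k-1}(X \cap \partial B_{hyp}^n(r)), \qquad V_{\mathrm{ball}}(r) := \Vol_k B_{hyp}^k(r).$$
In geodesic polar coordinates at the center $0$, the hyperbolic metric reads $d\rho^2 + \sinh^2(\rho)\, d\Omega^2$, so a direct computation gives $V_{\mathrm{ball}}'(r) = \omega_{k-1} \sinh^{k-1}(r)$, where $\omega_{k-1}$ denotes the volume of the unit $(k-1)$-sphere.

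First, I would apply the coarea formula to the Lipschitz function $\rho(\cdot) := d(0, \cdot)$, whose gradient on $\mathbb{H}^n$ has norm $1$. Since the radial component of any tangent vector is bounded by its full length, this gives $V'(r) \ge A(r)$ for almost every $r \in (0, R)$. Next I would form the geodesic cone $C(r)$ over the slice $X \cap \partial B_{hyp}^n(r)$, namely the image of $(t, x) \mapsto \gamma_x(tr)$ for $t \in [0,1]$, where $\gamma_x$ is the unit-speed geodesic from $0$ through $x$. The Jacobian in the warped-product metric is $(\sinh(\rho)/\sinh(r))^{k-1}$ along the angular directions, so
$$\Vol_k C(r) = \frac{A(r)}{\sinh^{k-1}(r)} \int_0^r \sinh^{k-1}(\rho)\, d\rho = \frac{A(r)\, V_{\mathrm{ball}}(r)}{V_{\mathrm{ball}}'(r)}.$$

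Because $\partial X \subset \partial B_{hyp}^n(R)$, for $r < R$ both $X \cap B_{hyp}^n(r)$ and $C(r)$ have boundary equal to the slice $X \cap \partial B_{hyp}^n(r)$. Replacing $X \cap B_{hyp}^n(r)$ by $C(r)$ inside $X$ produces an admissible competitor with the same boundary $\partial X$, so the mass-minimizing property of $X$ forces $V(r) \le \Vol_k C(r)$. Combining this with the coarea inequality gives
$$V'(r) \;\ge\; A(r) \;\ge\; \frac{V_{\mathrm{ball}}'(r)}{V_{\mathrm{ball}}(r)}\, V(r),$$
which is exactly $\frac{d}{dr}\!\bigl(V(r)/V_{\mathrm{ball}}(r)\bigr) \ge 0$, the desired monotonicity.

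The main obstacle is regularity. The slice $X \cap \partial B_{hyp}^n(r)$, the coarea inequality, and the cone comparison all have to be interpreted in the sense of integral currents, because $X$ need not be a smooth chain and in general meets $\partial B_{hyp}^n(r)$ only in a measure-theoretic sense. Justifying that $\partial(X \cap B_{hyp}^n(r)) = X \cap \partial B_{hyp}^n(r)$ for a.e.\ $r$, and that the cone construction yields a rectifiable current with that boundary, requires the Federer--Fleming slicing theory; this is the content one must import as a black box. The hyperbolic geometry itself enters only through the single warp factor $\sinh(\rho)$, which replaces the Euclidean factor $\rho$ and is precisely what forces the normalizing denominator to be $V_{\mathrm{ball}}(r)$ rather than $\omega_{k-1} r^k / k$.
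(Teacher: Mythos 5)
Your argument is the same as the paper's: cone comparison for mass-minimizers, the coarea inequality $V'(r)\ge A(r)$, and integration of the resulting differential inequality against $V_{\mathrm{ball}}'/V_{\mathrm{ball}}$. The only cosmetic difference is that you derive the cone-volume constant $V_{\mathrm{ball}}(r)/V_{\mathrm{ball}}'(r)$ by direct computation with the $\sinh$ warp factor, whereas the paper gets it by a symmetry argument (evaluating the constant on the model case where the cone is a hyperbolic $k$-ball); both routes give identically the formula $(1H)$, and your remarks on the slicing/regularity issues mirror the caveats the paper states at the end of its sketch.
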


In particular, if $X$ contains the center of the ball $B^n_{hyp}(R)$ and $X$ is a smooth manifold near that point, then $Vol_k X$ is at
least the volume of a k-dimensional hyperbolic ball of radius $R$.

Anderson's theorem is deduced quickly from Theorem \ref{hypmono}.  The stationary integral current $Z_{min}$ as before can have singularities, but a fundamental result of geometric measure theory says that almost every point is regular, i.e. it has a neighborhood where $Z_{min}$ is a smooth submanifold.  We pick one such point
$x$.  We consider the intersection of $Z_{min}$ with the ball around $x$ of radius $R$ equal to the injectivity radius of $(M, hyp)$.  This ball is
isometric to the ball of radius $R$ in $\mathbb{H}^n$.  By the monotonicity theorem, the volume of $Z_{min}$ in this ball is at least the volume
of a k-dimensional hyperbolic ball of radius $R$.

Let us now give the main idea of the proof of Theorems \ref{mono} and \ref{hypmono}.

The monotonicity theorem is based on the formula for the volume of a cone.  Let $Y^{k-1}$ be a surface in $\partial B^n(r) \subset \mathbb{R}^n$.
We let $CY$ denote the cone over $Y$ with vertex at the origin.  This is the union of all the line segments with one endpoint at the origin (the
center of $B^n(r)$) and the other endpoint on $Y$.  The formula for the volume of a cone is as follows:

$$ \Vol_k CY = (r/k) \Vol_{k-1} (Y). $$

Here is one way of thinking about this formula.  Because of the symmetry of the sphere and the ball, we must have a formula of the type
$\Vol_k CY = Const(r,k) \Vol_{k-1} Y$.  Then we can evaluate $Const(r,k)$ by looking at the simplest example: when $CY$ is a k-dimensional ball of
radius $r$ and $Y$ is a (k-1)-dimensional sphere of radius $r$.  Let $VB_k(r)$ be the volume of a k-dimensional ball of radius $r$ and $VS_{k-1}(r)$
be the volume of a (k-1)-sphere of radius $r$.  We get

$$ Vol_k CY = \frac{VB_k(r)}{VS_{k-1}(r)} Vol_{k-1} Y. \eqno{(1)}$$

\noindent The ratio $VB_k(r) / VS_{k-1}(r)$ is equal to $r/k$ because the volume of a $k$-dimenional cone is $(1/k)$ times the volume of the base times the height.  But in fact Equation (1) is more useful than the formulation with $r/k$.

We return to the monotonicity formula.  We let $X \subset B^n(R)$ be our minimal chain.  We let $X_r := X \cap B^n(r)$ and we let $Y_r = \partial
X_r$.  Since $X$ is minimal, $X_r$ must also be minimal, and so

$$ Vol_k X_r \le Vol_k CY_r =  \frac{VB_k(r)}{VS_{k-1}(r)} Vol_{k-1} Y_r.  \eqno{(2)} $$

The coarea inequality says that $\frac{d}{dr} Vol_k X_r \ge Vol_{k-1} Y_r$ (for almost every $r$).  We let $V(r) := Vol_k X_r$, and the last equation becomes the following differential inequality.

$$ V'(r) \ge \frac{VS_{k-1}(r)}{VB_k(r)} V(r). \eqno{(3)}$$

Since $\frac{d}{dr} VB_k(r) = VS_{k-1}(r)$, Equation $(3)$ implies:

$$ \frac{d}{dr} \left( \frac{V(r)}{VB_k(r)} \right) \ge 0. \eqno{(4)}$$

\noindent Indeed, by expanding the left hand side of $(4)$ and plugging in $(3)$, $(4)$ is proven.  Equation (4) is the monotonicity formula of Theorem \ref{mono}.

The story above generalizes in a straightforward way to hyperbolic space.  If $Y$ is a (k-1)-dimensional
surface in the boundary of the hyperbolic ball of radius $r$, then by symmetry we get $Vol_k CY = const(r,k) Vol_{k-1} Y$ for all $Y$.  We can
evaluate $const(r,k)$ by looking at the example when $CY$ is a hyperbolic k-dimensional ball of radius $r$.  We let $VHB_k(r)$ denote the volume of
a hyperbolic k-dimensional ball of radius $r$, and we let $VHS_{k-1}(r)$ denote the (k-1)-volume of the boundary of the ball.  We get

$$ Vol_k CY = \frac{VHB_k(r)}{VHS_{k-1}(r)} Vol_{k-1} Y. \eqno{(1H)}$$

We let $X$ be our minimal chain in $B^n_{hyp}(R)$, with $\partial X \subset \partial B^n_{hyp}(R)$.  We let $X_r:= X \cap B^n_{hyp}(r)$, and we 
let $Y_r = \partial X_r$.  By minimality, we get:

$$ Vol_k X_r \le Vol_k CY_r =  \frac{VHB_k(r)}{VHS_{k-1}(r)} Vol_{k-1} Y_r.  \eqno{(2H)} $$

As before, $\frac{d}{dr} Vol_k X_r \ge Vol_{k-1} Y_r$.  We let $V(r) := Vol_k X_r$.  The last equation becomes the following differential inequality:

$$ V'(r) \ge \frac{VHS_{k-1}(r)}{VHB_k(r)} V(r). \eqno{(3H)}$$

Since $\frac{d}{dr} VHB_k(r) = VHS_{k-1}(r)$, Equation (3H) implies the hyperbolic monotonicity formula from Theorem \ref{hypmono}:

$$ \frac{d}{dr} \left( \frac{V(r)}{VHB_k(r)} \right) \ge 0. \eqno{(4H)}$$

This concludes our description of some of the geometric ideas in the proof of Anderson's inequalities.  What is missing is to prove that $Z_{min}$ exists, and that $Z_{min}$ is smooth at almost every point, and that $Z_{min}$ is a sufficiently nice geometric object so that the reasoning above applies to it.  This is a standard topic in geometric measure theory, and it takes a substantial amount of work.  One can prove that $Z_{min}$ exists for cycles with coefficients in either $\bbz$ or $\bbz_2$, but the arguments are slightly different.  Working over $\bbz_2$ one can use mod 2 flat cycles instead of integral currents.  After proving the existence and some regularity of $Z_{min}$, the argument above will give the version of Anderson's theorem needed in the current paper.

\textsc{Larry Guth, MIT, Cambridge MA 02139 USA}

{\it lguth@math.mit.edu}

\bigskip

\textsc{Alexander Lubotzky, Institute of Mathematics, Hebrew University, Jerusalem 91904 Israel}

{\it alex.lubotzky@mail.huji.ac.il}

\end{document}